\newif \ifIMA
\ifIMA
\documentclass[final]{imanum}
\jno{\#}
\received{14 March 2017}
\else
\documentclass[11pt]{article}
\fi

\ifIMA
\else
\usepackage[a4paper]{geometry}
\fi
\usepackage{amsmath,amsfonts,amssymb,amsthm}
\usepackage{graphicx,verbatim,mathtools,color}
\usepackage[only,llparenthesis,rrparenthesis]{stmaryrd}
\usepackage{bm}

\definecolor{dblue}{rgb}{0.,0.,0.8}

\ifIMA
\hypersetup{bookmarks=true, colorlinks, linkcolor=black, citecolor=black, urlcolor=dblue, plainpages=true, pdfwindowui=false,pdfstartview={FitH}}
\else
\usepackage{hyperref}
\hypersetup{bookmarks=true, colorlinks, linkcolor=dblue, citecolor=dblue, urlcolor=dblue, plainpages=true, pdfwindowui=false,pdfstartview={FitH}}
\fi 

\numberwithin{equation}{section}

\ifIMA
\else
\theoremstyle{plain}
\newtheorem{theorem}{Theorem}[section]
\newtheorem{definition}[theorem]{Definition}
\newtheorem{lemma}[theorem]{Lemma}
\newtheorem{corollary}[theorem]{Corollary}
\theoremstyle{remark}
\newtheorem{remark}{Remark}[section]
\fi


\newcommand{\ccite}[1]{\ifIMA\citet*[]{#1}\else\cite{#1}\fi}
\newcommand{\ocite}[2][]{\ifIMA\citet*[#1]{#2}\else\cite[#1]{#2}\fi}
\newcommand{\curl}[1]{\ifIMA\href{#1}{#1}\else\url{#1}\fi}


\newcommand{\lld}{\llparenthesis}
\newcommand{\rrd}{\rrparenthesis}


\newcommand{\calB}{\mathcal{B}}
\newcommand{\calE}{\mathcal{E}}
\newcommand{\calI}{\mathcal{I}}
\ifIMA
\newcommand{\calQ}{\mathcal{P}}
\else
\newcommand{\calQ}{\mathcal{Q}}
\fi
\newcommand{\calP}{\mathcal{P}}
\newcommand{\calR}{\mathcal{R}}

\newcommand{\calT}{\mathcal{T}}
\newcommand{\calV}{\mathcal{V}}

\newcommand{\ver}{{\bm{a}}}

\newcommand{\dd}{\mathrm{d}}
\renewcommand{\dim}{d}
\newcommand{\abs}[1]{\lvert#1\rvert}
\newcommand{\Labs}[1]{\left\lvert#1\right\rvert} 
\newcommand{\tends}{\rightarrow}
\newcommand{\norm}[1]{\lVert#1\rVert}
\newcommand{\p}{\partial}
\renewcommand{\th}{{h\tau}}

\newcommand{\eval}[2]{\left. #1\right|_{#2}}
\DeclareMathOperator{\Div}{div}

\DeclareMathOperator{\diam}{diam}
\DeclareMathOperator*{\argmin}{argmin}

\newcommand{\pair}[2]{\langle #1,#2 \rangle}
\newcommand{\R}{\mathbb{R}}
\newcommand{\Om}{\Omega}
\newcommand{\om}{\omega}

\newcommand{\DO}{\partial\Om}

\newcommand{\vphi}{\varphi}

\newcommand{\CR}{\widetilde{\calT^{n}}}
\newcommand{\calVh}{\mathcal{V}^n}
\newcommand{\calVhint}{\mathcal{V}^n_{\mathrm{int}}}
\newcommand{\calVhext}{\mathcal{V}^n_{\mathrm{ext}}}

\newcommand{\Vn}{V^n}
\newcommand{\Vh}{V_h}
\newcommand{\Vt}{V_{h\tau}}

\newcommand{\VCR}{\widetilde{\Vn}}
\newcommand{\Vnm}{V^{n-1}}
\newcommand{\elCR}{\widetilde{K}}

\newcommand{\RTN}{\bm{RTN}}

\newcommand{\RTNa}{\bm{RTN}_{p_{\ver}}(\Ta)}

\newcommand{\Hdiv}{\bm{H}(\Div)}


\newcommand{\Ta}{\widetilde{\calT^{\ver}}}
\newcommand{\oma}{{\om_{\ver}}}

\newcommand{\Hdivoma}{\bm{H}(\Div,\oma)}
\newcommand{\psia}{\psi_{\ver}}
\newcommand{\hpsia}{\overline{\psi}_{\ver}}

\newcommand{\Va}{\bm{V}_h^\ver}

\newcommand{\Pa}{\calP_{p_{\ver}}(\Ta)}
\newcommand{\Pal}{\calP_{p_{\ver}-1}(\Ta)}

\newcommand{\Vthan}{\bm{V}_{h\tau}^{\ver,n}}

\newcommand{\Pia}{\Pi_{\th}^{\ver,n}}

\newcommand{\stha}{\bm{\sigma}_{\th}^{\ver,n}}

\newcommand{\sth}{\bm{\sigma}_{\th}}

\newcommand{\tautha}{\bm{\tau}_{\th}^{\ver,n}}

\newcommand{\gtautha}{g_{\th}^{\ver,n}}

\newcommand{\Hstar}{H^1_{\dagger}(\oma)}
\newcommand{\Hneg}{[\Hstar]^\prime}

\newcommand{\Uth}{\calI u_{\th}}

\newcommand{\etaEq}{\eta_{\mathrm{F},K}^{n}}
\newcommand{\etaJ}{\eta_{\mathrm{J},K}^n}

\newcommand{\etaOsca}{\eta_{\mathrm{osc}}^{\ver,n}}

\newcommand{\etaOscTh}{\eta_{\mathrm{osc},\th}^n}
\newcommand{\etaOscInit}{\eta_{\mathrm{osc},\mathrm{init}}}

\newcommand{\Ex}{\calE_X}
\newcommand{\Exa}{\calE_X^{\ver,n}}
\newcommand{\Ra}{\calR_{\th}^{\ver,n}}


\newcommand{\YT}{Y_T}

\newcommand{\LH}{L^2(0,T;H^1_0(\Om))}
\newcommand{\HHm}{H^1(0,T;H^{-1}(\Om))}
\newcommand{\Bx}{\calB_X}


\ifIMA
\title{Equilibrated flux a posteriori error estimates in $L^2(H^1)$-norms for high-order discretizations of parabolic problems}

\author{{\sc Alexandre Ern\thanks{Email:alexandre.ern@enpc.fr}, Iain~Smears\thanks{Corresponding author. Email:iain.smears@inria.fr} and Martin~Vohral\'ik\thanks{Email:martin.vorhalik@inria.fr}} \\[2pt] 
Universit\'e Paris-Est, CERMICS (ENPC), 77455 Marne-la-Vall\'{e}e cedex 2, France \\[2pt] \& INRIA Paris, 2 Rue Simone Iff, 75589 Paris, France.}

\shortauthorlist{A.~Ern, I.~Smears and M.~Vohral\'{i}k}

\else

\title{Equilibrated flux a posteriori error estimates in $L^2(H^1)$-norms for high-order discretizations of parabolic problems\thanks{This project has received funding from the European Research Council (ERC) under the European Union's Horizon 2020 research and innovation program (grant agreement No 647134 GATIPOR).}}
\author{Alexandre~Ern\footnotemark[2] \and Iain~Smears\footnotemark[2] \and Martin~Vohral\'ik\footnotemark[2]}

\fi

\begin{document}

\ifIMA
\else
\renewcommand{\thefootnote}{\fnsymbol{footnote}}
\footnotetext[2]{Universit\'e Paris-Est, CERMICS (ENPC), 77455 Marne-la-Vall\'{e}e cedex 2, France \& Inria Paris, 2 rue Simone Iff, 75589 Paris, France, (alexandre.ern@enpc.fr, iain.smears@inria.fr, martin.vohralik@inria.fr).}
\renewcommand{\thefootnote}{\arabic{footnote}}
\fi

\maketitle

\begin{abstract}
{We consider the a posteriori error analysis of fully discrete approximations of parabolic problems based on conforming $hp$-finite element methods in space and an arbitrary order discontinuous Galerkin method in time. Using an equilibrated flux reconstruction, we present a posteriori error estimates yielding guaranteed upper bounds on the $L^2(H^1)$-norm of the error, without unknown constants and without restrictions on the spatial and temporal meshes. It is known from the literature that the analysis of the efficiency of the estimators represents a significant challenge for $L^2(H^1)$-norm estimates. Here we show that the estimator is bounded by the $L^2(H^1)$-norm of the error plus the temporal jumps under the one-sided parabolic condition $h^2 \lesssim \tau$.
This result improves on earlier works that required stronger two-sided hypotheses such as $h \simeq \tau$ or $h^2\simeq \tau$; instead our result now encompasses the practically relevant case for computations and allows for locally refined spatial meshes. The constants in our bounds are robust with respect to the mesh and time-step sizes, the spatial polynomial degrees, and also with respect to refinement and coarsening between time-steps, thereby removing any transition condition.}
\ifIMA{Parabolic partial differential equations, a posteriori error estimates, guaranteed upper bound, polynomial-degree robustness, high-order methods}\fi
\end{abstract}

\ifIMA\else{\noindent\bfseries Key words: }Parabolic partial differential equations, a posteriori error estimates, guaranteed upper bound, polynomial-degree robustness, high-order methods\fi \smallskip

\section{Introduction}\label{sec:intro}
We consider the heat equation
\begin{equation}\label{eq:parabolic}
\begin{aligned}
\p_t u - \Delta u = f & & & \text{in }\Om\times(0,T),\\
 u = 0 & & &\text{on }\DO\times (0,T),\\
 u(0) = u_0 & & &\text{in }\Om,
\end{aligned}
\end{equation}
where $\Om \subset \R^d $, $1\leq d \leq 3$, is a bounded, connected, polytopal open set with Lipschitz boundary, and $T>0$ is the final time. We assume that $f \in L^2(0,T;L^2(\Om))$, and that $u_0\in L^2(\Om)$.
We are interested here in the a posteriori error analysis in the $L^2(H^1)$-norm of fully discrete numerical methods for \eqref{eq:parabolic}. In particular, we consider an arbitrary-order discontinuous Galerkin finite element method (DGFEM) in time, coupled with a conforming $hp$-FEM in space.
We recall that a posteriori error estimates should ideally provide \emph{guaranteed upper bounds} on the error, without unknown constants.
Otherwise, if the estimators constitute an upper bound on the error up to an unknown constant, then we say instead that the estimators are \emph{reliable}.
Furthermore, the estimators should be \emph{locally efficient}, meaning that the local estimators should lie below the error measured in a local space-time neighbourhood, up to a generic constant. Finally, the estimators should ideally be \emph{robust}, with all constants in the bounds being independent of all discretization parameters. Furthermore, on a practical side it is highly desirable that the estimators be locally computable. We refer the reader to~\ccite{Verfurth2013} for an introduction to these concepts. 
Our motivation for considering the heat equation~\eqref{eq:parabolic} as a model problem is that the posteriori error estimates developed in this context serve as a starting point for extensions to diverse applications, for example nonlinear problems~(see \ccite{AmreinWihler2016,DiPietroVohralikSoleiman2015,DolejsiErnVohralik2013,Kreuzer2013}), as well as playing a central role in adaptive algorithms~(see~\ccite{ChenFeng2004,GaspozKreuzerSiebertZiegler2016,KreuzerMollerSchmidtSiebert2012}). For nonconforming discretization methods in space, we refer to~\ccite{ErnVohralik2010,GeorgoulisLakkisVirtanen2011,NicaiseSoualem2005}.

The literature shows that the structure of parabolic problems leads to several outstanding challenges facing the central goals in a posteriori error estimation. In particular, several difficulties arise in the analysis of the efficiency and robustness of the estimators. To explain some of the challenges, first recall that the a posteriori error analysis of parabolic problems admits a range of norms in which to measure the error: for instance, these include the $L^2(H^1)$-norm (see~\ccite{Picasso1998,Verfurth1998}), $L^2(L^2)$-norm (see \ccite{Verfurth1998a}), $L^\infty(L^2)$-norms and $L^\infty(L^\infty)$-norms (see~\ccite{ErikssonJohnson1995}), $L^\infty(L^2)\cap L^2(H^1)$-norms (see~\ccite{LakkisMakridakis2006,MakridakisNochetto2003,SchotzauWihler2010}), and also the $L^2(H^1)\cap H^1(H^{-1})$-norms (see~\ccite{BergamBernardiMghazli2005,ErnVohralik2010,GaspozKreuzerSiebertZiegler2016,NicaiseSoualem2005,Repin2002,Verfurth2003}).
To our knowledge, efficiency results have so far only been attained in the case of the $L^2(H^1)$ norm under restrictions linking mesh and time-step sizes, whereas in the $L^2(H^1)\cap H^1(H^{-1})$ norm, such restrictions have been removed.
It is important to observe that these two functional settings admit an inf-sup theory for the continuous problem that establishes an equivalence between appropriate norms of the error and of the residual. Although no analysis of efficiency is yet available in the setting of other norms, the optimal order of convergence of the estimators has nonetheless been observed in \ccite{LakkisMakridakis2006,MakridakisNochetto2003} for instance.

A posteriori error estimators in the $L^2(H^1)$-norm for a class of nonlinear parabolic problem have been studied in \ccite{Verfurth1998}.
In particular, the analysis in \ccite{Verfurth1998} found that the ratio between the constants in the upper and lower bounds for the error by the estimators depends on $1 + \tau h^{-2} +  \tau^{-1} h^2 + \abs{\log h}$, see \ocite[Prop.~4.1]{Verfurth1998}, where $h$ denotes the spatial mesh size and $\tau$ denotes the time-step size, and thus the efficiency of the estimators is subject to the assumption that $\tau \simeq h^2$. 
\ccite{Picasso1998} studied implicit Euler discretizations of the heat equation: under the assumption that $\tau \simeq h$, he showed that the spatial estimator can be bounded from above by the $L^2(H^1)$-norm of the error plus the temporal jump estimator; in particular, the temporal jump estimator, denoted there by $\varepsilon_K^n$ defined in \ocite[eq.~(2.11)]{Picasso1998}, appears on the right-hand side of the lower bound \ocite[eq.~(2.24)]{Picasso1998}.
In both \ccite{Picasso1998,Verfurth1998}, the two-sided restrictions between the time-step and mesh sizes have the disadvantage of necessarily requiring that the meshes must be quasi-uniform, and thus theoretically prohibiting adaptive refinement.

Starting with \ccite{Verfurth2003}, one approach to removing these two-sided restrictions has been to consider a different functional framework for the a posteriori error analysis, namely by estimating the $L^2(H^1)\cap H^1(H^{-1})$-norm of the error. Part of the justification of this approach is to be found in the observation in \ocite[p.~198, Par.~(5)]{Verfurth2003}, showing that the estimators of \ccite{Picasso1998,Verfurth1998} are upper bounds to not only the $L^2(H^1)$-norm of the error, but also the $L^2(H^1) \cap H^1(H^{-1})$-norm of the error, up to data oscillation. It was then shown in \ccite{Verfurth2003} that these estimators are efficient, locally-in-time yet only globally-in-space, with respect to the $L^2(H^1)\cap H^1(H^{-1})$-norm of the error, without requiring conditions between mesh and time-step sizes; see also \ccite{BergamBernardiMghazli2005}.
Given that the estimators used in both frameworks are the same up to data oscillation, it is of course natural that more general efficiency results are obtainable in when including the $H^1(H^{-1})$ part of the norm, since it allows for the appearance of additional terms on the right-hand side in the efficiency bounds.

Recently, \ifIMA \ccite{ErnSmearsVohralik2016} developed \else we developed in \ccite{ErnSmearsVohralik2016} \fi a posteriori error estimators, based on equilibrated fluxes, for arbitrary order discretizations of parabolic problems within the $L^2(H^1)\cap H^1(H^{-1})$-norm setting, that are guaranteed, locally efficient, and robust. In particular, the analysis does not require any coupling between mesh and time-step sizes, and overcomes the problem of obtaining local-in-space and local-in-time efficiency by considering a natural extension of the $L^2(H^1)\cap H^1(H^{-1})$-norm to the time-nonconforming approximation space. The estimators are robust not only with respect to the mesh and time-step sizes, but also with respect to the polynomial degrees in space and time, and also with respect to mesh coarsening and refinement, thereby removing the so-called transition conditions previously encountered \ccite{Verfurth2003}. These results are built upon the analysis for elliptic problems in~\ccite{BraessPillweinSchoberl2009,ErnVohralik2010,ErnVohralik2015,ErnVohralik2016a}.

In this work, we present a posteriori error estimates for the $L^2(H^1)$-norm of the error, which are based on the same locally computable equilibrated flux as in~\ccite{ErnSmearsVohralik2016}, thereby showing that the same methodology can be used in the $L^2(H^1)$-norm estimates as for the $L^2(H^1)\cap H^1(H^{-1})$-norm.
Our main contributions, presented in Theorem~\ref{thm:X_norm_guaranteed_efficiency} in section~\ref{sec:results} below, include guaranteed upper bounds for the $L^2(H^1)$-norm of the error, and local-in-space-and-time lower bounds for the spatial estimator under the one-sided condition $h^2 \lesssim \tau$. We therefore remove the need for the two-sided conditions encountered previously, and we note that the assumptions in \ccite{Picasso1998,Verfurth1998} were stronger than our assumption. We emphasize that the regime where $h^2 \lesssim \tau$ is the one of practical interest in computations, since implicit methods offer the possibility for large time-steps.
Our lower bound is similar to \ccite{Picasso1998} in at least one respect, namely that the right-hand side of our lower bound includes the temporal jump estimator, since it does not appear possible to show in general that this estimator is locally bounded from above by the $L^2(H^1)$-norm of the error.
Furthermore, we show that the constant of the lower bound is robust with respect to the spatial polynomial degree, and is also robust with respect to refinement and coarsening of the meshes, thereby allowing us to remove the so-called transition conditions. We also show that our results imply local-in-space and local-in-time efficiency when considered in the framework of the augmented norms that were proposed in~\ccite{AkrivisMakridakisNochetto2009,MakridakisNochetto2006,SchotzauWihler2010}.

Our analysis rests upon the following key ingredients. First, in section~\ref{sec:infsup}, we present the inf-sup identity which relates the $L^2(H^1)$-norm of the error to an appropriate dual norm of the residual on test functions in a subspace of $L^2(H^1)\cap H^1(H^{-1})$. After setting the notation for the class of finite element methods in section~\ref{sec:fem}, we recall the construction of the equilibrated flux from our earlier work \ccite{ErnSmearsVohralik2016} in section~\ref{sec:flux_equilibration}. We state the main results in section~\ref{sec:results}. Section~\ref{sec:upper_bound} uses the inf-sup framework to prove the guaranteed upper bounds and the proof of the lower bounds is the subject of section~\ref{sec:efficiency}. It is based on the combination of two key ideas. The first is to take advantage of the semi-discreteness in time of the test functions appearing in the fundamental efficiency result of \ocite[Lemma~8.2]{ErnSmearsVohralik2016} in order to gain control over a negative norm on the time derivatives of the test functions; see Lemma~\ref{lem:X_norm_main_estimate} below. The second idea is to appeal to a specific pointwise-in-time identity for the discontinuous Galerkin time-stepping method, see Lemma~\ref{lem:time_dg_exact} below. Thus, we employ the definition of the numerical scheme for proving the lower bounds, which is somewhat unusual for a posteriori error analysis. The combination of these two ideas then yields the lower bounds stated in section~\ref{sec:results} under the relaxed hypothesis that $h^2 \lesssim \tau$ only.

Throughout this paper, the notation $a\lesssim b$ means that $a\leq C b$, with a generic constant~$C$ that depends possibly on the shape-regularity of the spatial meshes and the space dimension~$\dim$, but is otherwise independent of the mesh-size, time-step size, as well as the spatial and temporal polynomial degrees, or on refinement and coarsening between time-steps.

\section{Inf-sup theory}\label{sec:infsup}
Recall that $\Om \subset \R^d $, $1\leq d \leq 3$ is a bounded, connected, polyhedral open set with Lipschitz boundary. For an arbitrary open subset $\omega\subset \Om$, we use $(\cdot,\cdot)_{\om}$ to denote the $L^2$-inner product for scalar- or vector-valued functions on $\omega$, with associated norm $\norm{\cdot}_{\om}$. In the special case where $\om = \Om$, we drop the subscript notation, i.e.\ $\norm{\cdot}\coloneqq\norm{\cdot}_{\Om}$.

The starting point of the analysis is the weak formulation of problem~\eqref{eq:parabolic} where the time derivative has been cast onto a test function, using integration by parts in time. In particular, the solution space $X$ and test space $\YT$ are defined by
\begin{equation}
\begin{aligned}
	X & \coloneqq \LH,  
	\\  \YT &\coloneqq \{ \vphi \in \LH \cap \HHm,  \vphi(T) =0 \}.
\end{aligned}
\end{equation}
The spaces~$X$ and $\YT$ are equipped with the norms
\begin{equation}\label{eq:XYnorms}
\begin{aligned}
\norm{v}_X^2 & \coloneqq \int_0^T \norm{\nabla v}^2 \,\dd t & & \forall\, v \in X,\\
\norm{\vphi}_{\YT}^2 & \coloneqq \int_0^T \norm{\p_t \vphi}_{H^{-1}(\Om)}^2 + \norm{\nabla \vphi}^2 \,\dd t + \norm{\vphi(0)}^2 & & \forall\, \vphi \in \YT.
\end{aligned}
\end{equation}
Let the bilinear form $\Bx \colon X\times \YT\tends \R$ be defined by
\begin{equation}
\begin{aligned}
\Bx(v,\vphi) \coloneqq \int_0^T - \pair{\p_t \vphi}{v} + (\nabla v,\nabla \vphi) \,\dd t & && \forall\,v\in X,\; \vphi \in \YT,
\end{aligned}
\end{equation}
where $\pair{\cdot}{\cdot}$ denotes the duality pairing between $H^{-1}(\Om)$ and $H^1_0(\Om)$.
Then, problem~\eqref{eq:parabolic} admits the following weak formulation: find $u \in X$ such that
\begin{equation}\label{eq:X_formulation}
\begin{aligned}
\Bx(u,\vphi) = \int_0^T (f,\vphi)\,\dd t + (u_0,\vphi(0)) & && \forall\, \vphi \in \YT.
\end{aligned}
\end{equation}
The well-posedness of~\eqref{eq:X_formulation} is well-known and can be shown by Galerkin's method, see for instance the textbook~\ccite{Wloka1987}.
Note that in this weak formulation, the initial condition $u(0)=u_0$ is expressed as a natural condition, appearing in \eqref{eq:X_formulation}, rather than as an essential condition imposed by the choice of solution space.

\begin{remark}
Problem~\eqref{eq:parabolic} admits an alternative weak formulation where the test space is $X$ and the trial space is $\LH \cap \HHm$
The two formulations possess the same solution, although they lead to different quantitative relations between the norm of the error and of the residual.
\end{remark}

The following result states an inf--sup stability result for the bilinear form~$\Bx$. This inf--sup stability result has the interesting and important property of taking the form of an identity, which is advantageous for the sharpness of a~posteriori error analysis, and shows that the choice of norms for the spaces $X$ and $\YT$ in \eqref{eq:XYnorms} above are optimal.
\begin{theorem}[Inf--sup identity]\label{thm:inf_sup_parabolic}
For every $v \in X$, we have
\begin{equation}
\begin{aligned}
\norm{v}_{X} & = \sup_{\vphi \in Y_T\setminus\{0\}} \frac{ \Bx(v,\vphi) }{\norm{\vphi}_{Y_T}}.  \label{eq:infsup_X}
\end{aligned}
\end{equation}
\end{theorem}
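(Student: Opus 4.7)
My approach is to recast the inf--sup identity as a standard duality pairing in $L^2(0,T; H^{-1}(\Om))$, by establishing an isometric isomorphism between $\YT$ and $L^2(0,T; H^{-1}(\Om))$. For $v \in X$ and $\vphi \in \YT$, the pointwise-in-time identity $(\nabla v, \nabla \vphi) = \pair{-\Delta \vphi}{v}$ holds since $v(t) \in H^1_0(\Om)$, so that
\begin{equation*}
\Bx(v, \vphi) = \int_0^T \pair{F(\vphi)}{v}\,\dd t, \qquad F(\vphi) \coloneqq -\p_t \vphi - \Delta \vphi \in L^2(0,T; H^{-1}(\Om)).
\end{equation*}
The standard $L^2(H^{-1})$--$L^2(H^1_0)$ duality then yields $\norm{v}_X = \sup_{G \neq 0} \int_0^T \pair{G}{v}\,\dd t / \norm{G}_{L^2(H^{-1})}$, so it suffices to show that $\vphi \mapsto F(\vphi)$ is an isometric bijection from $\YT$ onto $L^2(0,T;H^{-1}(\Om))$.

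The core computation is the isometry $\norm{F(\vphi)}_{L^2(H^{-1})} = \norm{\vphi}_{\YT}$. I would equip $H^{-1}(\Om)$ with the Hilbertian inner product induced by the Riesz map $R \coloneqq -\Delta \colon H^1_0(\Om) \to H^{-1}(\Om)$, so that $\norm{-\Delta w}_{H^{-1}} = \norm{\nabla w}$ and, pointwise in time,
\begin{equation*}
\norm{F(\vphi)}_{H^{-1}}^2 = \norm{\p_t \vphi}_{H^{-1}}^2 + 2\,(\p_t \vphi, \Delta \vphi)_{H^{-1}} + \norm{\nabla \vphi}^2.
\end{equation*}
The cross term unpacks via the Riesz map as $(\p_t \vphi, \Delta \vphi)_{H^{-1}} = \pair{\p_t \vphi}{R^{-1}(\Delta \vphi)} = -\pair{\p_t \vphi}{\vphi}$, and the Lions--Magenes integration-by-parts formula together with $\vphi(T) = 0$ yields $2 \int_0^T (\p_t \vphi, \Delta \vphi)_{H^{-1}}\,\dd t = \norm{\vphi(0)}^2 - \norm{\vphi(T)}^2 = \norm{\vphi(0)}^2$. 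Combining with the squared-norm terms gives $\norm{F(\vphi)}_{L^2(H^{-1})}^2 = \norm{\vphi}_{\YT}^2$. Surjectivity of $F$ follows from well-posedness of the backward parabolic problem $-\p_t \vphi - \Delta \vphi = G$, $\vphi(T) = 0$, which reduces after time reversal to a standard forward Lions--Magenes parabolic problem.

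With the isometry in hand, the identity follows directly: the upper bound is Cauchy--Schwarz, and the supremum is \emph{attained} by the unique $\vphi \in \YT$ associated with the choice $F(\vphi) = -\Delta v$, i.e., the solution of $-\p_t \vphi - \Delta \vphi = -\Delta v$ with $\vphi(T) = 0$; direct computation yields $\Bx(v,\vphi) = \norm{v}_X^2$ and $\norm{\vphi}_{\YT} = \norm{v}_X$, saturating the inequality. The main technical hurdle is the isometry calculation itself, where one must carefully identify the $H^{-1}$-inner product through the Riesz map and justify the Lions--Magenes formula via the embedding $L^2(H^1_0) \cap H^1(H^{-1}) \hookrightarrow C([0,T]; L^2)$ which gives pointwise meaning to $\vphi(0)$ and $\vphi(T)$; once this is in place, the rest of the proof is essentially algebraic.
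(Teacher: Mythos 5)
Your proof is correct and follows essentially the same route as the paper: the isometry $\norm{-\p_t\vphi-\Delta\vphi}_{L^2(0,T;H^{-1}(\Om))}=\norm{\vphi}_{\YT}$ that you establish via the Riesz map is precisely the adjoint inf--sup identity \eqref{eq:adjoint_inf_sup} that the paper quotes from its companion work, and your extremal choice $-\p_t\vphi-\Delta\vphi=-\Delta v$ with $\vphi(T)=0$ is exactly the backward-in-time parabolic problem used for the paper's converse bound. The only difference is one of presentation: you prove the isometry directly rather than citing it, which makes the argument self-contained.
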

\begin{proof}
The arguments in the proof of~\ocite[Theorem~2.1]{ErnSmearsVohralik2016} can be used to show the following inf-sup identity: for any $\vphi \in \YT$, we have
\begin{equation}\label{eq:adjoint_inf_sup}
\begin{aligned}
\norm{\vphi}_{\YT} = \sup_{v\in X\setminus\{0\}} \frac{\Bx(v,\vphi)}{\norm{v}_X} .
\end{aligned}
\end{equation}
So, \eqref{eq:adjoint_inf_sup} immediately implies the lower bound $\norm{v}_X \geq \sup_{\vphi \in \YT\setminus\{0\}} \Bx(v,\vphi) / \norm{\vphi}_{\YT}$ for any fixed $v\in X$. 
To obtain the converse bound, let $\vphi_* \in \YT$ denote the solution of $\Bx(w,\vphi_*) = \int_0^T (\nabla w, \nabla v) \,\dd t $ for all $w \in X$.
This problem can simply be seen as a backward-in-time parabolic problem with final time condition $\vphi_*(T)=0$. Hence, we have $\norm{v}_X^2 =  \Bx(v,\vphi_*)$ and  \eqref{eq:adjoint_inf_sup} implies that $\norm{\vphi_*}_{\YT} = \norm{v}_X$. This immediately shows that $\norm{v}_X \leq \sup_{\vphi \in \YT\setminus\{0\}} \Bx(v,\vphi) / \norm{\vphi}_{\YT}$, and completes the proof of~\eqref{eq:infsup_X}.
\end{proof}

In order to estimate the error between the solution $u$ of~\eqref{eq:parabolic} and its approximation, we define the residual functional~$\calR_X \colon X\tends [\YT]^\prime$ by
\begin{equation}\label{eq:R_Y_def}
\pair{\calR_X(v)}{\vphi}_{[\YT]^\prime\times \YT} \coloneqq \calB_X(u-v,\vphi)= \int_0^T (f,\vphi) +  \pair{\p_t \vphi}{v} - (\nabla v,\nabla \vphi) \,\dd t + (u_0,\vphi(0)),
\end{equation}
where $v\in X$ and $\vphi \in \YT$, and where the equality follows simply from~\eqref{eq:X_formulation}.
The dual norm of the residual $\norm{\calR_X(v)}_{[\YT]^\prime}$ is naturally defined by
\begin{equation}
\norm{\calR_X(v)}_{[\YT]^\prime}\coloneqq \sup_{\vphi\in \YT\setminus\{0\}} \frac{\pair{\calR_X(v)}{\vphi}}{\norm{\vphi}_{\YT}}.
\end{equation}
Theorem~\ref{thm:inf_sup_parabolic} implies the following \emph{equivalence between the error and dual norm of the residual}: 
\begin{equation}\label{eq:X_error_residual_equivalence}
\begin{aligned}
\norm{u-v}_{X} = \norm{\calR_X(v)}_{[\YT]^\prime} & & &\forall\,v\in X.
\end{aligned}
\end{equation}

\section{Finite element approximation}\label{sec:fem}
The time interval~$(0,T)$ is partitioned into sub-intervals $I_n\coloneqq (t_{n-1},t_n) $, with $1\leq n \leq N$, where it is assumed that $[0,T]=\bigcup_{n=1}^N\overline{I_n}$, and that $\{t_n\}_{n=0}^N$ is strictly increasing with $t_0=0$ and  $t_N = T$.
For each interval $I_n $, we let $\tau_n \coloneqq t_n-t_{n-1}$ denote the local time-step size.
No special assumptions are made about the relative sizes of the time-steps to each other.
A temporal polynomial degree $q_n\geq 0$ is associated to each time-step $I_n$, and we gather all the polynomial degrees in the vector $\bm q = (q_n)_{n=1}^N$.
For a general vector space $V$, we shall write $\calQ_{q_n}\left(I_n;V\right)$ to denote the space of $V$-valued univariate polynomials of degree at most $q_n$ over the time-step interval $I_n$.

\subsection{Meshes}
For each $0\leq n \leq N$, let $\calT^n$ denote a matching simplicial mesh of the domain $\Om$, where we assume shape-regularity of the meshes uniformly with respect to $n$.
We consider here only matching simplicial meshes for simplicity, although we indicate that mixed simplicial--parallelepipedal meshes, possibly containing hanging nodes, can also be treated: see \ccite{DolejsiErnVohralik2016} for instance.
The mesh $\calT^0$ will be used to approximate the initial datum $u_0$.
For each element~$K\in\calT^n$, let $h_K\coloneqq \diam K$ denote the diameter of $K$. We associate a local spatial polynomial degree $p_K\geq 1$ to each $K\in \calT^n$, and we gather all spatial polynomial degrees in the vector $\bm p_n= (p_K)_{K\in\calT^n}$. In order to keep our notation sufficiently simple, the dependence of the local spatial polynomial degrees $p_K$ on the time-step is kept implicit, although we bear in mind that the polynomial degrees may change between time-steps.

\subsection{Approximation spaces}
Given a general matching simplicial mesh $\calT$ and given a vector of polynomial degrees $\bm p=(p_K)_{K\in\calT}$, $p_K\geq 1$ for all $K\in\calT$, we define the $H^1_0(\Om)$-conforming $hp$-finite element space $\Vh(\calT,\bm p)$ by
\begin{equation}\label{eq:conforming_space_def}
\Vh (\calT,\bm p)\coloneqq \left\{v_h \in H^1_0(\Om),\; \eval{v_h}{K} \in \calP_{p_K}(K)\quad\forall\,K\in\calT\right\},
\end{equation}
where $\calP_{p_K}(K)$ denotes the space of polynomials of total degree at most $p_K$ on $K$.
To shorten the notation, let $\Vn \coloneqq V_h(\calT^n,\bm{p}_n)$ for each $0\leq n \leq N$. Let $\Pi_{h} u_0 \in V^0$ denote an approximation to the initial datum $u_0$, a typical choice being the $L^2$-orthogonal projection of $u_0$ onto $V^0$.
Given the collection of time intervals $\{I_n\}_{n=1}^N$, the vector $\bm q$ of temporal polynomial degrees, and the $hp$-finite element spaces $\{\Vn\}_{n=0}^N$, the finite element space $\Vt$ is defined by
\begin{equation}\label{eq:space_time_fem}
\Vt \coloneqq \left\{ v_{\th}|_{(0,T)}\in X,\; \eval{v_{\th}}{I_n} \in \calQ_{q_n}(I_n;\Vn) \quad\forall\, n=1,\dots,N,\; v_{\th}(0)\in V^0 \right\}.
\end{equation}
Functions in $\Vt$ are generally discontinuous with respect to the time-variable at the temporal partition points. We take them to be left-continuous: for all $1\leq n \leq N$, we define $v_{\th}(t_n)$ as the trace at $t_n$ of the restriction $\eval{v_{\th}}{I_n}$. Moreover, functions in $\Vt$ also have a well-defined value at $t_0=0$.
For all $0\leq n < N$, we denote the right-limit of $v_{\th}\in\Vt$ at $t_n$ by $v_{\th}(t_n^+)$.
Then, the temporal jump operators $\lld \cdot \rrd_n$ are defined by
\begin{equation}\label{eq:jump_operators}
\lld v_{\th} \rrd_n \coloneqq v_{\th}(t_n)-v_{\th}(t_n^+), \quad 0\leq n \leq N-1.
\end{equation}

\subsection{Refinement and coarsening}
Similarly to other works, e.g., \ocite[p.~196]{Verfurth2003}, we assume that we have at our disposal a common refinement mesh $\CR$ of $\calT^{n-1}$ and $\calT^n$ for each $1\leq n \leq N$, as well as associated polynomial degrees $\widetilde{\bm{p}}_n=(p_{\elCR})_{\elCR\in\CR}$, such that $\Vnm + \Vn \subset \VCR \coloneqq  \Vh(\CR,\widetilde{\bm{p}}_n) $.
For a function $v_{\th}\in \Vt$, we observe that $\lld v_{\th} \rrd_{n-1} \in \VCR$ for each $1\leq n \leq N$ since $v_{\th}(t_{n-1})\in \Vnm$, $v_{\th}(t_{n-1}^+)\in \Vn$, and $\Vnm + \Vn \subset \VCR$.
It is assumed that the shape-regularity of $\CR$ is equivalent up to constants to those of $\calT^{n-1}$ and $\calT^n$, and that every element $\elCR\in\CR$ is wholly contained in a single element $K^\prime\in\calT^{n-1}$ and a single element $K^{\prime\prime}\in\calT^n$. We emphasize that we do not require any assumptions on the relative coarsening or refinement between successive spaces $\Vnm$ and $\Vn$.
In particular, we do not need the transition condition assumption from~\ocite[p.~196, 201]{Verfurth2003}, which requires a uniform bound on the ratio of element sizes between $\CR$ and $\calT^n$.

\subsection{Numerical method}
The numerical scheme consists of finding  $u_{\th} \in \Vt $ such that $u_{\th}(0)=\Pi_h u_0$, and such that
\begin{equation}\label{eq:num_scheme}
\int_{I_n} (\p_t u_{\th}, v_{\th} ) + (\nabla u_{\th}, \nabla v_{\th}) \,\dd t - \left(\lld u_{\th} \rrd_{n-1}, v_{\th}(t_{n-1}^+) \right) = \int_{I_n} (f,v_{\th}) \,\dd t 
\end{equation}
for all test functions $v_\th \in \calQ_{q_n}(I_n;\Vn)$, for each time-step interval $I_n$, $n=1,\dots, N$.
Here the time derivative $\p_t u_{\th}$ is understood as the piecewise time-derivative on each time-step interval $I_n$.
The numerical solution $u_{\th}\in \Vt$ can thus be obtained by solving the fully discrete problem~\eqref{eq:num_scheme} on each successive time-step. At each time-step, this requires solving a linear system that is symmetric only in the case $q_n=0$; this can be performed efficiently in practice for arbitrary orders following~\ccite{Smears2016}. Note further that the initial condition $u_{\th}(0)=\Pi_h u_0$ does not guarantee that the right-limit $u_{\th}(0^+)$ should equal $\Pi_h u_0$.

\subsection{Reconstruction operator}\label{sec:reconstruction} For each time-step interval $I_n$ and each nonnegative integer $q$, let $L_q^n$ denote the polynomial on $I_n$ obtained by mapping the standard $q$-th Legendre polynomial under an affine transformation of $(-1,1)$ to $I_n$. It follows that $L_q^n(t_n) =1$ for all $q\geq 0$, and $L_q^n(t_{n-1})=(-1)^q$, and that the mapped Legendre polynomials $\{L_q^n\}_{q\geq 0}$ are $L^2$-orthogonal on $I_n$, and satisfy $\int_{I_n}\abs{L_q^n}^2\,\dd t = \frac{\tau_n}{2q+1}$ for all $q\geq 0$.
Following \ccite{MakridakisNochetto2006} (see also \ocite[Remark~2.3]{Smears2016}), we introduce the reconstruction operator $\calI$ defined on $\Vt$ by
\begin{equation}\label{eq:def_radau_reconstruction}
\begin{aligned}
\eval{(\calI v_{\th})}{I_n} \coloneqq \eval{ v_{\th} }{I_n} +  \frac{(-1)^{q_n} }{2} \left( L_{q_n}^n - L_{q_n+1}^n \right) \lld v_{\th} \rrd_{n-1} & & & \forall\,v_{\th}\in\Vt.
\end{aligned}
\end{equation}
It is clear that $\calI$ is a linear operator on $\Vt$.
Furthermore, the definition ensures that $\eval{\calI  v_{\th} }{I_n}(t_n) = v_{\th}(t_n)$, and that $ \eval{\calI v_{\th} }{I_n}(t_{n-1}^+) = v_{\th}(t_{n-1})$ for all $1\leq n\leq N$. 
This implies that $\calI v_{\th}$ is continuous with respect to the temporal variable at the interval partition points $\{t_n\}_{n=0}^{N-1}$ and hence $\calI v_{\th} \in H^1(0,T;H^1_0(\Om))$. Furthermore, $\eval{ \calI v_{\th} }{I_n} \in   \calQ_{q_n+1}\big( I_n;\VCR\big)$ for any $v_{\th}\in\Vt$, where we recall that $\Vnm + \Vn \subset \VCR$.
It is well-known from \ccite{ErnSchieweck2016,MakridakisNochetto2006,Smears2016} that we may rewrite the numerical scheme~\eqref{eq:num_scheme} as
\begin{equation}\label{eq:num_scheme_equiv}
\int_{I_n} (\p_t  \Uth,v_{\th}) + (\nabla u_{\th}, \nabla v_{\th})
\,\dd t = \int_{I_n} (f,v_{\th}) \,\dd t \quad  \forall\,v_\th \in \calQ_{q_n}(I_n;\Vn).
\end{equation}
Note also that $\Uth(0)=\Pi_h u_0$.


\section{Construction of the equilibrated flux}\label{sec:flux_equilibration}

The a posteriori error estimates presented in this paper are based on a discrete and locally computable $\bm{H}(\Div)$-conforming flux~$\sth$ that satisfies the key equilibration property
\begin{equation}\label{eq:sigma_th_equilibration}
\begin{aligned}
\p_t \Uth + \nabla{\cdot} \sth  = f_{\th} & &  &\text{in }\Om\times(0,T),
\end{aligned}
\end{equation}
where $\Uth$ is defined in section~\ref{sec:reconstruction}, and $f_{\th}\approx f$ is an approximation of the data that is defined in~\eqref{eq:f_discrete_approx} below. We call $\sth$ an equilibrated flux.
The construction of $\sth$ given here is exactly the same as in \ccite{ErnSmearsVohralik2016}. This has the practical benefit that a single construction of the equilibrated flux can be used for both a posteriori error estimates in the $L^2(H^1)\cap H^1(H^{-1})$-norm and also in the $L^2(H^1)$-norm.

\subsection{Local mixed finite element spaces}
For each $1\leq n \leq N$, let $\calVh$ denote the set of vertices of the mesh $\calT^n$, where we distinguish the set of interior vertices $\calVhint$ and the set of  boundary vertices $\calVhext$. For each $\ver \in \calVh$, let $\psia$ denote the hat function associated with $\ver$, and let $\oma$ denote the interior of the support of $\psia$, with associated diameter $h_{\oma}$.
Furthermore, let $\Ta$ denote the restriction of the mesh $\CR$ to~$\oma$.
Recalling that the common refinement spaces $\VCR$ were obtained with a vector of polynomial degrees $\widetilde{\bm{p}}_n = (p_{\elCR})_{\elCR\in \CR}$, we associate to each $\ver \in \calVh$ the fixed polynomial degree
\begin{equation}\label{eq:patch_polynomial_degree}
p_{\ver} \coloneqq \max_{\elCR\in \Ta} (p_{\elCR}+1).
\end{equation}
For a polynomial degree $p\geq 0$, let the piecewise polynomial (discontinuous) spaces $\calP_{p}(\Ta)$ and $\RTN_p(\Ta)$ be defined by
\begin{align*}
\calP_{p}(\Ta) &\coloneqq \{ q_h \in L^2(\oma),\quad q_h|_{\elCR} \in \calP_{p}(\elCR)\quad\forall\,\elCR\in\Ta\},
\\  \RTN_p(\Ta) &\coloneqq \{ \bm{v}_h \in \bm{L}^2(\oma),\quad \bm{v}_h|_{\elCR} \in \RTN_{p}(\elCR)\quad\forall\elCR\in\Ta\},
\end{align*}
where  $\RTN_{p}(\elCR) \coloneqq  \bm{\calP}_{p}(\elCR) + \calP_{p}(\elCR)\bm{x}$ denotes the Raviart--Thomas--N\'ed\'elec space of order $p$ on the simplex~$\elCR$.
It is important to notice that whereas the patch $\oma$ is subordinate to the elements of the mesh $\calT^n$ around the vertex $\ver\in\calVh$, the spaces $\calP_{p}(\Ta)$ and $\RTN_p(\Ta)$ are subordinate to the submesh elements in $\Ta$; of course, in the absence of coarsening, this distinction vanishes.
We now introduce the local spatial mixed finite element space $\Va$, defined by
\begin{align*}
\Va & \coloneqq
\begin{cases}
    \left\{\bm{v}_h \in \Hdivoma\cap \RTNa ,\; \bm{v}_h\cdot \bm{n} =0\text{ on }\p\oma \right\} & \text{if }\ver\in\calVhint,\\
    \left\{\bm{v}_h \in \Hdivoma\cap \RTNa  ,\; \bm{v}_h\cdot \bm{n} =0\text{ on }\p\oma\setminus\DO \right\}& \text{if }\ver\in\calVhext.
\end{cases}
\end{align*}
We then define the space-time mixed finite element space 
\begin{equation}\label{eq:spacetime_mixed_space_def}
\begin{aligned}
\Vthan \coloneqq \calQ_{q_n}(I_n;\Va),
\end{aligned}
\end{equation}
where we recall that $\calQ_{q_n}\left(I_n;\Va\right)$ denotes the space of $\Va$-valued univariate polynomials of degree at most $q_n$ over the time-step interval $I_n$.

\subsection{Data approximation}\label{sec:data_approximation}
Our a posteriori error estimates given in section~\ref{sec:results} involve certain approximations of the source term $f$ appearing in \eqref{eq:parabolic}. It is helpful to define these approximations here.
For each $1\leq n \leq N$ and for each $\ver\in\calVh$, let $\Pia$ be the $L^2_{\psia}$-orthogonal projection from $L^2(I_n;L^2_{\psia}(\oma))$ onto $\calQ_{q_n}(I_n;\Pal)$, where $L^2_{\psia}(\oma)$ is the space of measurable functions $v$ on $\oma$ such that $\int_{\oma} \psia \abs{v}^2\,\dd x<\infty$.
In other words, the projection operator $\Pia$ is defined by $\int_{I_n} (\psia \Pia v, q_{\th})_{\oma}\,\dd t = \int_{I_n} (\psia v , q_{\th})_{\oma}\,\dd t$ for all $\, q_{\th} \in \calQ_{q_n}(I_n;\Pal)$.
We adopt the convention that $\Pia v$ is extended by zero from $\oma\times I_n$ to $\Omega\times(0,T)$ for all $v\in L^2(I_n;L^2_{\psia}(\oma))$.
Then, we define $f_{\th}$ by
\begin{equation}\label{eq:f_discrete_approx}
f_{\th} \coloneqq \sum_{n=1}^N\sum_{\ver\in\calVh}\psia \,\Pia f.
\end{equation}
See \ccite{ErnSmearsVohralik2016} for further remarks concerning the approximation properties of $f_{\th}$. In particular, it is shown there that $f_{\th}$ is a data approximation that is at least of same order as the one used in the numerical scheme~\eqref{eq:num_scheme}.

\subsection{Flux reconstruction}\label{sec:flux_reconstruction_def}

For each $1\leq n \leq N$ and each $\ver\in\calVh$, let the scalar function $\gtautha \in \calQ_{q_n}(I_n;\Pa) $ and vector field $\tautha \in \calQ_{q_n}(I_n;\RTNa) $ be defined by
\begin{subequations}\label{eq:tau_g_def}
\begin{align}
\tautha &\coloneqq  \psia \nabla u_{\th}|_{\oma\times I_n},\label{eq:tau_def} \\
\gtautha &\coloneqq \psia\,\left(\Pia  f  - \p_t \Uth\right)|_{\oma\times I_n} - \nabla \psia \cdot \nabla u_{\th}|_{\oma\times I_n}.\label{eq:g_def}
\end{align}
\end{subequations}
For interior vertices, the numerical scheme~\eqref{eq:num_scheme_equiv} implies that
\begin{equation}
\label{eq:gthan_mean_value_zero}
\begin{aligned}
(\gtautha(t),1)_{\oma} = 0 & & & \forall\,t\in I_n.
\end{aligned}
\end{equation}

\begin{definition}[Flux reconstruction]
\label{def:flux_construction_1}Let $u_\th\in \Vt$ be the numerical solution of~\eqref{eq:num_scheme}. For each time-step interval $I_n$ and for each vertex $\ver\in\calV$, let the space $\Vthan$ be defined by \eqref{eq:spacetime_mixed_space_def}.
Let $\gtautha$ and $\tautha$ be defined by \eqref{eq:tau_g_def}.
Let $\stha \in \Vthan$ be defined by
\begin{equation}\label{eq:stha_minimization_def}
\stha \coloneqq \argmin_{\substack{ \bm{v}_h \in \Vthan \\ \nabla{\cdot} \bm{v}_h = \gtautha}}\int_{I_n} \norm{\bm{v}_h + \tautha}_{\oma}^2\,\dd t.
\end{equation}
Then, after extending $\stha$ by zero from $\oma\times I_n$ to $\Om \times (0,T)$ for each $\ver \in \calV$ and for each $1\leq n\leq N$, we define
\begin{equation}\label{eq:flux_reconstruction_1}
\sth\coloneqq \sum_{n=1}^N \sum_{\ver \in \calVh} \stha.
\end{equation}
\end{definition}
Note that $\stha\in \Vthan$ is well-defined for all $\ver\in\calVh$: in particular, for interior vertices $\ver\in\calVhint$, we use~\eqref{eq:gthan_mean_value_zero} to guarantee the compatibility of the datum $\gtautha$ with the constraint $\nabla{\cdot}\stha = \gtautha$.
The following key result is quoted from \ccite{ErnSmearsVohralik2016}.
\begin{theorem}[Equilibration]\label{thm:sigma_th_equilibration}
Let the flux reconstruction~$\sth$ be given by~Definition~{\upshape\ref{def:flux_construction_1}}, and let $f_{\th}$ be defined in \eqref{eq:f_discrete_approx}.
Then $\sth \in L^2(0,T;\Hdiv)$ and the equilibration identity \eqref{eq:sigma_th_equilibration} holds.
\end{theorem}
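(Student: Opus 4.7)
Since the theorem is quoted from \ccite{ErnSmearsVohralik2016}, the strategy is to outline the three building blocks that together yield the conclusion: well-posedness of each local minimization, conformity of the global sum in $\Hdiv$, and the pointwise divergence identity via partition of unity.

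\emph{Step 1: Well-posedness of the local problems.} For each $1 \le n \le N$ and each $\ver \in \calVh$ I would verify that the constraint set in \eqref{eq:stha_minimization_def} is nonempty; uniqueness then follows from strict convexity of the quadratic objective. For boundary vertices the constraint $\nabla \cdot \bm v_h = \gtautha$ can always be satisfied in $\Va$ since normal traces on $\DO \cap \p\oma$ are free. For interior vertices a compatibility condition is needed, namely $(\gtautha(t),1)_{\oma} = 0$ for all $t \in I_n$; this is precisely \eqref{eq:gthan_mean_value_zero}. To establish it, I would insert the test function $v_\th(\bm x, t) = \psia(\bm x)\, p(t)$, with $p$ an arbitrary polynomial of degree $\le q_n$, into the numerical scheme \eqref{eq:num_scheme_equiv}; combined with the defining property of $\Pia$ (which ensures $\int_{I_n} p(t)\,(\psia, f)_{\oma}\,\dd t = \int_{I_n} p(t)\,(\psia, \Pia f)_{\oma}\,\dd t$) and the observation that the bracket $(\p_t \Uth, \psia)_{\oma} + (\nabla u_\th, \nabla \psia)_{\oma} - (\psia, \Pia f)_{\oma}$ is a polynomial of degree $\le q_n$ in $t$, the identity must hold pointwise in $t$. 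This is the step most likely to require care, since one must match the temporal polynomial degrees of each term.

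\emph{Step 2: Global $\Hdiv$-conformity.} By construction, each $\stha \in \Va$ satisfies $\stha \cdot \bm n = 0$ on $\p\oma$ (interior vertex) or on $\p\oma \setminus \DO$ (boundary vertex). Extending $\stha$ by zero outside $\oma \times I_n$ therefore preserves $\Hdiv$-conformity, so $\stha(\cdot,t) \in \Hdivo$ for a.e.\ $t$. Since $\stha$ is a polynomial of degree $\le q_n$ in time on $I_n$ and its extension is piecewise defined across the $I_n$'s, $L^2(0,T;\Hdiv)$ regularity follows for the sum $\sth$ in \eqref{eq:flux_reconstruction_1}.

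\emph{Step 3: The equilibration identity.} This is a direct calculation using the partition of unity $\sum_{\ver \in \calVh} \psia = 1$ in $\Om$, which also gives $\sum_{\ver \in \calVh} \nabla \psia = \bm 0$. Fixing $t \in I_n$ and summing the constraint $\nabla \cdot \stha = \gtautha$ over $\ver \in \calVh$ yields
\begin{equation*}
\nabla \cdot \sth = \sum_{\ver \in \calVh} \psia \, \Pia f - \Big(\sum_{\ver \in \calVh} \psia\Big) \p_t \Uth - \Big(\sum_{\ver \in \calVh} \nabla \psia\Big) \cdot \nabla u_{\th} = f_{\th} - \p_t \Uth,
\end{equation*}
where the last equality uses the partition of unity identities and the definition \eqref{eq:f_discrete_approx} of $f_{\th}$. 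This is exactly \eqref{eq:sigma_th_equilibration}, completing the proof.
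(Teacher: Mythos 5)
Your proposal is correct and follows exactly the argument that the paper relies on: the paper quotes this theorem from \ccite{ErnSmearsVohralik2016} without reproducing the proof, but the ingredients it does record --- the compatibility condition \eqref{eq:gthan_mean_value_zero} obtained by testing \eqref{eq:num_scheme_equiv} with $\psia$ times a temporal polynomial (cf.\ Lemma~\ref{lem:time_dg_exact}), the vanishing normal traces built into $\Va$ so that zero extension preserves $\Hdiv$-conformity, and the partition of unity $\sum_{\ver\in\calVh}\psia=1$ combined with \eqref{eq:tau_g_def} and \eqref{eq:f_discrete_approx} --- are precisely your Steps 1--3. No gaps.
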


Moreover, for the purpose of implementation, it is known that on each patch of the mesh and at each time-step, the solution of the minimization problem~\eqref{eq:stha_minimization_def} decouples into $q_n+1$ independent spatial mixed finite element linear systems, which helps to reduce the cost of computing the flux $\sth$.

\section{Main results}\label{sec:results}

We introduce the following a posteriori error estimators and data oscillation terms:
\begin{subequations}\label{eq:estimators}
\begin{align}
[\etaEq]^2 &\coloneqq  \int_{I_n}\norm{\sth  + \nabla u_{\th} }_{K}^2 \,\dd t , \label{eq:etaEq_def} \\
[\etaJ]^2 & \coloneqq \int_{I_n} \norm{\nabla(u_{\th} - \Uth)}_{K}^2 \, \dd t, \label{eq:etaJ_def}\\
 [\etaOscTh]^2 & \coloneqq \frac{1+\sqrt{2}}{2} \int_{I_n} \sum_{\elCR\in\CR} \left[\frac{\tau_n}{\pi}+\frac{h_{\elCR}^2}{\pi^2} \right] \norm{f-f_{\th} }_{\elCR}^2 \,\dd t, \label{eq:etaOscTh_def} \\
\etaOscInit & \coloneqq \norm{u_0-\Pi_h u_0},
\end{align}
\end{subequations}
where, $K\in \calT^n$, $1\leq n \leq N$, the equilibrated flux~$\sth$ is prescribed in Definition~\ref{def:flux_construction_1}, and where the data approximation $f_{\th}$ is defined in section~\ref{sec:data_approximation}.
The total estimator for the error is defined by 
\begin{equation}\label{eq:etaX_def}
[\eta_X]^2 \coloneqq \sum_{n=1}^N \left\{\left[\sum_{K\in\calT^n} \left\{ [\etaEq]^2+[\etaJ]^2\right\}\right]^{\frac{1}{2}} + \etaOscTh \right\}^2 + [\etaOscInit]^2.
\end{equation}
The flux estimator $\etaEq$ and the temporal jump estimator $\etaJ$ are the two main estimators. In particular, the flux estimator $\etaEq$ measures the lack of $\bm{H}(\Div)$-conformity of $\nabla u_{\th}$, and the temporal jump estimator~$\etaJ$ measures the lack of temporal conformity of~$u_{\th}$. 
Indeed, $\etaJ$ is related to the jump $\lld u_{\th} \rrd_{n-1}$, since it was shown in~\ccite{SchotzauWihler2010,ErnSmearsVohralik2016} that $\etaJ$ can be equivalently rewritten as
\begin{equation}\label{eq:jump_equivalent_form}
\etaJ = \sqrt{\tfrac{\tau_n ({q_n}+1)}{(2q_n+1)(2q_n+3)}}\, \norm{\nabla \lld u_{\th} \rrd_{n-1}}_K.
\end{equation}
Given that $\etaEq$ and $\etaJ$ respectively measure the lack of spatial and temporal conformity of the approximate solution, it is common in the literature to call $\etaEq$ the spatial estimator and $\etaJ$ the temporal estimator. However, such terminology must not be interpreted as stating that these estimators bound the errors due respectively to the spatial and temporal discretization.

\begin{theorem}[$X$-norm a posteriori error estimate]\label{thm:X_norm_guaranteed_efficiency}
Let $u \in X$ be the weak solution of \eqref{eq:parabolic}, and let $u_{\th}\in \Vt$ denote the solution of the numerical scheme~\eqref{eq:num_scheme}.
Let~$\eta_X$ be defined by \eqref{eq:etaX_def}. Then, we have the following $X$-norm a posteriori error estimate:
\begin{equation}\label{eq:Xnorm_guaranteed_upper}
\norm{u-u_{\th}}_X \leq \eta_X.
\end{equation}
If $K\in \calT^n$, $1\leq n \leq N$, is an element such that $h_{\oma}^2 \leq \gamma_{\ver} \, \tau_n$ for each $\ver\in\calV_K$, with $\calV_K$ the set of vertices of the element $K$, with some constant $\gamma_{\ver}>0$, where $h_{\oma}$ denotes the diameter of the patch $\oma$, then we have the local lower bound for the flux estimator $\etaEq$
\begin{equation}\label{eq:Xnorm_local_efficiency}
 [\etaEq]^2 \leq C^2_{\gamma_{\ver},q_n} \sum_{\ver\in\calV_{K}} \left\{ \int_{I_n}\norm{\nabla(u-u_{\th})}_{\oma}^2 + \norm{\nabla(u_{\th}-\Uth)}_{\oma}^2 \dd t  + [\etaOsca]^2  \right\},
\end{equation}
where the local data ocillation $\etaOsca$ is defined by
\begin{equation}
\label{eq:tetaOsca_def}
[\etaOsca]^2\coloneqq  \int_{I_n} \norm{ f - \Pia f}_{H^{-1}(\oma)}^2 \dd t.
\end{equation}
Furthermore, under the hypothesis that there exists $\gamma>0$ such that $h_{\oma}^2 \leq \gamma \, \tau_n$ for every $\ver\in\calVh$ and every $1\leq n \leq N$, then we have the global lower bound
\begin{equation}\label{eq:Xnorm_global_efficiency}
\sum_{n=1}^N \sum_{K\in\calT^n} [\etaEq]^2  \leq C_{\gamma,q_n}^2\left\{ \norm{u-u_{\th}}_X^2 + \norm{u_{\th}-\Uth}_X^2 + \sum_{n=1}^N \sum_{\ver\in\calVh} [\etaOsca]^2 \right\}.
\end{equation}
The constants $C_{\gamma_{\ver},q_n}$ in \eqref{eq:Xnorm_local_efficiency} and $C_{\gamma,q_n}$ in~\eqref{eq:Xnorm_global_efficiency} satisfy $C_{\gamma,q_n} \lesssim (q_n+1)^{\frac{1}{2}} + \gamma (q_n+1)^{\frac{5}{2}}$, and may depend on the shape regularity of $\calT^n$ and $\CR$ and on the dimension~$\dim$, but otherwise do not depend on the mesh-size, time-step size, spatial polynomial degrees, or on refinement and coarsening between time-steps.
\end{theorem}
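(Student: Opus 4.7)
The proof splits naturally into the guaranteed upper bound \eqref{eq:Xnorm_guaranteed_upper}, the local lower bound \eqref{eq:Xnorm_local_efficiency}, and the global lower bound \eqref{eq:Xnorm_global_efficiency} obtained by summation. For the upper bound, my starting point is the error--residual equivalence \eqref{eq:X_error_residual_equivalence}, which reduces the task to bounding $\pair{\calR_X(u_{\th})}{\vphi}$ uniformly over $\vphi\in\YT$ with $\norm{\vphi}_{\YT}=1$. I would first split $\pair{\p_t\vphi}{u_{\th}}=\pair{\p_t\vphi}{u_{\th}-\Uth}+\pair{\p_t\vphi}{\Uth}$, then integrate the second piece by parts in time (using $\Uth\in H^1(0,T;H^1_0(\Om))$ with $\Uth(0)=\Pi_h u_0$ and $\vphi(T)=0$) and substitute the equilibration identity $\p_t\Uth+\nabla{\cdot}\sth=f_{\th}$ together with spatial integration by parts. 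This yields the decomposition
\begin{equation*}
\pair{\calR_X(u_{\th})}{\vphi} = \int_0^T\bigl[(f-f_{\th},\vphi) - (\sth+\nabla u_{\th},\nabla\vphi) + \pair{\p_t\vphi}{u_{\th}-\Uth}\bigr]\,\dd t + (u_0-\Pi_h u_0,\vphi(0)).
\end{equation*}
The per-time-step contributions are bounded individually: the flux term by $\bigl[\sum_K[\etaEq]^2\bigr]^{1/2}\norm{\nabla\vphi}_{L^2(I_n;L^2)}$; the jump term by $\bigl[\sum_K[\etaJ]^2\bigr]^{1/2}\norm{\p_t\vphi}_{L^2(I_n;H^{-1})}$ via the identity \eqref{eq:jump_equivalent_form}; the oscillation term by $\etaOscTh$ times $\bigl[\norm{\nabla\vphi}^2_{L^2(I_n;L^2)}+\norm{\p_t\vphi}^2_{L^2(I_n;H^{-1})}\bigr]^{1/2}$ via Poincar\'e inequalities in space and time (producing the constants $1/\pi$, $1/\pi^2$ and $(1+\sqrt{2})/2$); and the initial term by $\etaOscInit\,\norm{\vphi(0)}$. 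Two successive Cauchy--Schwarz applications -- first combining the flux and jump contributions within each time step, then assembling across time steps together with the initial term -- reproduce the structure of $\eta_X$ exactly.

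\textbf{Local lower bound.} For an element $K\in\calT^n$ with $h_{\oma}^2\leq\gamma_\ver\tau_n$ at each of its vertices, the partition-of-unity identities $\sth|_K=\sum_{\ver\in\VK}\stha$ and $\nabla u_{\th}|_K=\sum_{\ver\in\VK}\tautha$ let me bound $[\etaEq]^2$ by a sum over $\ver\in\VK$ of patchwise quantities $\int_{I_n}\norm{\stha+\tautha}_{\oma}^2\,\dd t$. Each such quantity is then dualized via \ocite[Lemma~8.2]{ErnSmearsVohralik2016}, which expresses its bound as a supremum of a residual functional tested against semi-discrete-in-time functions $\bxi\in\calQ_{q_n}(I_n;\Hstar)$ supported on the patch. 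Inserting the PDE $\p_t u-\Delta u=f$ rewrites the residual as the sum of (i) a spatial contribution $\int_{I_n}(\nabla(u-u_{\th}),\nabla(\psia\bxi))_{\oma}\,\dd t$, already controlled by the $L^2(H^1)$-error on the patch; (ii) a critical time-derivative contribution $\int_{I_n}\pair{\p_t(u-\Uth)}{\psia\bxi}_{\oma}\,\dd t$; and (iii) a data oscillation bounded by $\etaOsca$.

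\textbf{Critical term and global bound.} The main obstacle is term (ii): a naive dual pairing would introduce a negative-in-time norm of $\p_t(u-\Uth)$ which is not controlled by the $X$-norm alone, and removing this deficit is the core novelty of the paper. My plan combines two ideas: first, invoke the pointwise-in-time DG identity of Lemma~\ref{lem:time_dg_exact} together with a spatial projection of $\psia\bxi$ onto $\Vn$ to exploit Galerkin orthogonality and cancel the leading contribution; second, use the polynomial-in-time nature of $\bxi$ through an inverse estimate in time on $I_n$ (which trades $\p_t$ for a factor $\tau_n^{-1}$ at the cost of a polynomial factor in $q_n$), combined with a Poincar\'e inequality on the patch to absorb $\psia\bxi$ into $h_{\oma}$ times an $H^1$-norm of $\bxi$. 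The net effect is a factor $h_{\oma}^2/\tau_n$ in front of an $H^1$-controlled residual, which the hypothesis $h_{\oma}^2\leq\gamma_\ver\tau_n$ caps at $\gamma_\ver$. A careful bookkeeping of the $q_n$-dependent constants then yields the stated form $C_{\gamma,q_n}\lesssim(q_n+1)^{1/2}+\gamma(q_n+1)^{5/2}$. Finally, the global bound \eqref{eq:Xnorm_global_efficiency} follows by summing \eqref{eq:Xnorm_local_efficiency} over $K$ and $n$ and using the bounded overlap of patches guaranteed by shape-regularity.
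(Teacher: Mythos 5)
Your upper-bound argument follows the paper's proof essentially verbatim: the error--residual equivalence, insertion of the equilibration identity with integration by parts in time and space, the Poincar\'e-based oscillation estimate of Lemma~\ref{lem:Xnorm_data_oscillation}, and two successive Cauchy--Schwarz passes. That part is sound. For the lower bounds you correctly identify the two driving ingredients (the pointwise DG identity of Lemma~\ref{lem:time_dg_exact} and an inverse-in-time/Poincar\'e-in-space argument producing the factor $h_{\oma}^2/\tau_n$), but your treatment of the critical time-derivative term has a genuine gap. The test functions supplied by \ocite[Lemma~8.2]{ErnSmearsVohralik2016} are polynomials of degree $q_n$ in time that do \emph{not} vanish at $t_{n-1}$ or $t_n$; consequently their zero-extensions do not lie in $\YT$, and any attempt to move $\p_t$ from $u-\Uth$ onto the test function (which you implicitly need in order to apply an inverse estimate in time to the \emph{polynomial} factor, since $u$ itself is not polynomial in time) generates uncontrolled temporal boundary terms $\left[(u-\Uth,\psia\bxi)_{\oma}\right]_{t_{n-1}}^{t_n}$. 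A direct dual pairing without this move reintroduces the $H^1(H^{-1})$-seminorm of the error, which is exactly what the $X$-norm setting forbids. The paper closes this gap with a bubble-in-time construction (Lemma~\ref{lem:X_norm_main_estimate}): the test function is corrected by multiples of the Legendre polynomials $L^n_{q_n+1}$ and $L^n_{q_n+2}$, which are $L^2(I_n)$-orthogonal to the degree-$q_n$ residual and force the corrected function to vanish at both endpoints, at the price of the $(q_n+1)^{5/2}$ factor in the constant. Without this step your plan does not go through.

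A second, smaller omission concerns interior vertices: for $\ver\in\calVhint$ the function $u-\Uth$ does not belong to $\Hstar$ because its $\psia$-weighted mean over $\oma$ need not vanish, so the pairing of $\p_t\vphi$ with $u-\Uth$ cannot be estimated by $\norm{\nabla(u-\Uth)}_{\oma}\norm{\p_t\vphi}_{\Hneg}$. The paper subtracts the weighted mean, works with the auxiliary function $e_{\ver}=u-\Uth-(u-\Uth,\hpsia)_{\oma}$, and uses the pointwise identity of Lemma~\ref{lem:time_dg_exact} together with the test function $\hpsia(\vphi,1)_{\oma}$ in the weak formulation precisely to control the subtracted mean-value part. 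Your appeal to the pointwise identity as a generic Galerkin-orthogonality cancellation via a spatial projection of $\psia\bxi$ onto $\Vn$ points in the right direction but does not pin down this mechanism; you would need to make the mean-value correction explicit to complete the interior-vertex case.
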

The proof of Theorem~\ref{thm:X_norm_guaranteed_efficiency} is given in several stages throughout the following sections. In the first stage, we give the proof of the upper bound \eqref{eq:Xnorm_guaranteed_upper} immediately after the helpful data oscillation estimate of Lemma~\ref{lem:Xnorm_data_oscillation} below in section~\ref{sec:upper_bound}. In the second stage, we show  the lower bounds \eqref{eq:Xnorm_local_efficiency} and \eqref{eq:Xnorm_global_efficiency} in section~\ref{sec:efficiency}.

\begin{remark}[Bounds for the jump estimator]\label{rem:jump_estimator}
In the local lower bound~\eqref{eq:Xnorm_local_efficiency}, we have $\int_{I_n} \norm{\nabla(u_{\th}-\Uth)}_{\oma}^2\dd t= \sum_{K\subset \oma} [\etaJ]^2$, see also~\eqref{eq:jump_equivalent_form}, where the sum is over all elements $K$ of $\calT^n$ contained in $\oma$. Similarly, in the global lower bound~\eqref{eq:Xnorm_global_efficiency}, the term $\norm{u_{\th}-\Uth}_X^2 = \sum_{n=1}^N\sum_{K\in \calT^n} [\etaJ]^2$ appears.
Thus our result here is comparable to those in \ccite{Picasso1998} where the jump estimator also appears on the right-hand side of the local lower bounds.
The reason for the appearance of this term can be essentially traced back to the lack of Galerkin orthogonality for the temporal reconstruction $\Uth$, see~\eqref{eq:num_scheme_equiv}.
Furthermore, in \ccite{ErnSmearsVohralik2016} it was shown that the (time-local but space-global) jump estimators are bounded from above by the (time-local space-global) $L^2(H^1)\cap H^1(H^{-1})$-norm of the error, up to possible data oscillation.
\end{remark}

\begin{remark}[Comparison with $L^2(H^1)\cap H^1(H^{-1})$-norm estimators]
As pointed out by the remark in \ocite[p.~198, Par.~(5)]{Verfurth2003} concerning the equivalence of residual-based estimators for both $L^2(H^1)$ and $L^2(H^1)\cap H^1(H^{-1})$ norms, it is important to observe that in the absence of data oscillation, the estimator $\eta_X$ defined above in \eqref{eq:etaX_def} is equivalent up to constants to the $L^2(H^1)\cap H^1(H^{-1})$-norm estimator defined in~\ocite[Eq.~(5.10)]{ErnSmearsVohralik2016}. However, an important difference between these estimators concerns the data oscillation. Indeed, it is known since \ccite{Verfurth2003} that $L^2(H^1)\cap H^1(H^{-1})$ estimators generally contain a data oscillation term that can be of same temporal order as the error. By comparison, the data oscillation term \eqref{eq:etaOscTh_def} features an additional half-order with respect to the time-step size. Therefore we expect that the $X$-norm estimator given above may be of special use in situations with significant data oscillation in time.
\end{remark}

Theorem~\ref{thm:X_norm_guaranteed_efficiency} is our main result on a posteriori error estimation of $\norm{u-u_{\th}}_X$. Several authors have also considered various augmented norms and error measures, see e.g.~\ccite{AkrivisMakridakisNochetto2009,MakridakisNochetto2006,SchotzauWihler2010}. For instance, we can define the error measure
\begin{equation}\label{eq:error_measures}
\Ex \coloneqq \max\left\{\norm{u-u_{\th}}_X, \norm{u-\Uth}_X \right\}.
\end{equation}
The choice in $\eqref{eq:error_measures}$ is only one of many possibilities; for instance we could equally well consider $\norm{u-u_{\th}}_X + \norm{u_{\th}-\Uth}_X$.
The interest of this approach is that the  bounds~\eqref{eq:Xnorm_guaranteed_upper}, \eqref{eq:Xnorm_local_efficiency} and~\eqref{eq:Xnorm_global_efficiency} immediately yield a global upper bound and local-in-time and local-in-space efficiency with respect to this error measure, see Corollary~\ref{cor:error_measures} below. 
However, it is important to note that it does not appear possible to show in general an equivalence between $\Ex$ and $\norm{u-u_{\th}}_X$, see Remark~\ref{rem:jump_estimator}. 

\begin{corollary}\label{cor:error_measures}
Let $\Ex$ be defined by \eqref{eq:error_measures}. Then, we have the guaranteed upper bound
\begin{equation}
\Ex \leq 2\, \eta_X,
\end{equation}
If $K\in \calT^n$, $1\leq n \leq N$, is an element such that $h_{\oma}^2 \leq \gamma_{\ver} \, \tau_n$ for each $\ver\in\calV_K$ with some constant $\gamma_{\ver}>0$, where $h_{\oma}$ denotes the diameter of the patch $\oma$, then we have the local efficiency bound
\begin{equation}
[\etaEq]^2 + [\etaJ]^2 \leq C_{\gamma_{\ver},q_n}^2 \sum_{\ver\in\calV_K} \left\{ [\Exa]^2 + [\etaOsca]^2 \right\}.
\end{equation}
where the local error measures $\Exa$, $\ver\in\calVh$, are defined by
\begin{equation}\label{eq:augmented_local_efficiency}
	[\Exa]^2 \coloneqq \max\left\{\int_{I_n}\norm{\nabla(u-u_{\th})}^2_{\oma}\,\dd t, \int_{I_n}\norm{\nabla(u-\Uth)}^2_{\oma}\,\dd t \right\}.
\end{equation}
Furthermore, under the hypothesis that there exists $\gamma>0$ such that $h_{\oma}^2 \leq \gamma \, \tau_n$ for every $\ver\in\calVh$ and every $1\leq n \leq N$, then we have the global efficiency bound
\begin{equation}\label{eq:augmented_global_efficiency}
\sum_{n=1}^N\sum_{K\in\calT^n} [\etaEq]^2 + [\etaJ]^2\leq C_{\gamma,q_n}^2 \left\{[\Ex]^2 + \sum_{n=1}^N \sum_{\ver\in\calVh} [\etaOsca]^2 \right\}.
\end{equation}
\end{corollary}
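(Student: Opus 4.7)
The corollary follows from Theorem~\ref{thm:X_norm_guaranteed_efficiency} by purely algebraic manipulations combining triangle inequalities with the observation that $\norm{u_\th-\Uth}_X^2 = \sum_{n=1}^N\sum_{K\in\calT^n}[\etaJ]^2$, which is a quantity already controlled by $\eta_X$ via its definition~\eqref{eq:etaX_def} and by $[\Ex]^2$ via the triangle inequality. The plan has three steps, one per bound.

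\emph{Upper bound.} The bound $\norm{u-u_\th}_X \leq \eta_X$ is already given by~\eqref{eq:Xnorm_guaranteed_upper}. For the second term in $\Ex$, a triangle inequality yields $\norm{u-\Uth}_X \leq \norm{u-u_\th}_X + \norm{u_\th-\Uth}_X$. Inspecting~\eqref{eq:etaX_def} shows that $\norm{u_\th-\Uth}_X = \bigl(\sum_{n,K}[\etaJ]^2\bigr)^{1/2} \leq \eta_X$. Hence both terms of the maximum in~\eqref{eq:error_measures} are bounded by $2\eta_X$.

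\emph{Local efficiency.} Starting from~\eqref{eq:Xnorm_local_efficiency}, I would apply the triangle inequality $\norm{\nabla(u_\th-\Uth)}_{\oma} \leq \norm{\nabla(u-u_\th)}_{\oma} + \norm{\nabla(u-\Uth)}_{\oma}$ followed by the elementary $(a+b)^2 \leq 2a^2+2b^2$ inside the time integral, obtaining $\int_{I_n}\norm{\nabla(u_\th-\Uth)}_{\oma}^2\,\dd t \leq 4[\Exa]^2$ by definition of $\Exa$. Combined with $\int_{I_n}\norm{\nabla(u-u_\th)}_{\oma}^2\,\dd t \leq [\Exa]^2$, this produces $[\etaEq]^2 \leq C_{\gamma_\ver,q_n}^2\sum_{\ver\in\calV_K}\{5[\Exa]^2 + [\etaOsca]^2\}$. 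For $[\etaJ]^2$, the triangle inequality together with $K\subset \oma$ for every $\ver\in\calV_K$ gives $[\etaJ]^2 \leq 4[\Exa]^2$ for any such $\ver$. Summing the two controls (and absorbing constants into $C_{\gamma_\ver,q_n}$) delivers~\eqref{eq:augmented_local_efficiency}.

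\emph{Global efficiency.} Starting from~\eqref{eq:Xnorm_global_efficiency}, I would estimate $\norm{u_\th-\Uth}_X \leq \norm{u-u_\th}_X + \norm{u-\Uth}_X \leq 2\Ex$ by another triangle inequality and by definition of $\Ex$. Since also $\norm{u-u_\th}_X \leq \Ex$, the right-hand side of~\eqref{eq:Xnorm_global_efficiency} is bounded by $C_{\gamma,q_n}^2\{5[\Ex]^2 + \sum_{n,\ver}[\etaOsca]^2\}$. Finally, adding the identity $\sum_{n,K}[\etaJ]^2 = \norm{u_\th-\Uth}_X^2 \leq 4[\Ex]^2$ produces~\eqref{eq:augmented_global_efficiency} up to an adjustment of $C_{\gamma,q_n}$. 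There is no real obstacle here: all the analytical content has already been packaged into Theorem~\ref{thm:X_norm_guaranteed_efficiency}, and the corollary only repackages the output in terms of the augmented error measure $\Ex$.
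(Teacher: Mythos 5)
Your proposal is correct and matches the paper's intent exactly: the paper gives no separate proof of Corollary~\ref{cor:error_measures} precisely because it follows from Theorem~\ref{thm:X_norm_guaranteed_efficiency} by the triangle inequalities and the identity $\norm{u_\th-\Uth}_X^2=\sum_{n,K}[\etaJ]^2$ that you spell out. The only blemish is a cross-referencing slip at the end of your local-efficiency step, where you cite \eqref{eq:augmented_local_efficiency} (the definition of $\Exa$) instead of the unlabeled local efficiency bound itself; the mathematics is fine.
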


\section{Proof of the guaranteed upper bound~\eqref{eq:Xnorm_guaranteed_upper}}\label{sec:upper_bound}

We will make use of the following preparatory lemmas.

\begin{lemma}
Let $I_n$ be a given time interval, and let $\vphi \in L^2(I_n;H^1_0(\Om))\cap H^1(I_n;H^{-1}(\Om))$ be an arbitrary function. Let $\vphi^n\in H^1_0(\Om)$, the time-mean value of $\vphi$ over $I_n$, be defined by $\vphi^n \coloneqq \tfrac{1}{\tau_n}\int_{I_n} \vphi\,\dd t$. Then
\begin{subequations}\label{eq:bochner_bounds}
\begin{align}
\int_{I_n} \norm{\nabla \vphi^n}^2\,\dd t & \leq \int_{I_n}\norm{\nabla \vphi}^2\,\dd t, \label{eq:approx_2}
\\ \int_{I_n} \norm{\vphi-\vphi^n}^2 \,\dd t &\leq \frac{\tau_n}{\pi}\left(\int_{I_n} \norm{\p_t \vphi}^2_{H^{-1}(\Om)}\,\dd t\right)^{\frac{1}{2}}\left(\int_{I_n}\norm{\nabla \vphi}^2\,\dd t \right)^{\frac{1}{2}}. \label{eq:approx_1}
\end{align}
\end{subequations}
\end{lemma}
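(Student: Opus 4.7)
The plan is to treat the two bounds in turn; the first is a direct Jensen-type estimate, while the second is obtained by a duality argument combined with a Hilbert-space-valued Wirtinger inequality on the interval $I_n$.

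For \eqref{eq:approx_2}, I would use that $\nabla\vphi^n=\tfrac{1}{\tau_n}\int_{I_n}\nabla\vphi\,\dd t$ commutes with the time averaging, so by Cauchy--Schwarz in time
\[
\norm{\nabla\vphi^n}^2 \leq \tfrac{1}{\tau_n}\int_{I_n}\norm{\nabla\vphi}^2\,\dd t.
\]
Integrating this (constant in $t$) identity over $I_n$ yields the desired inequality, with equality up to a cross term that turns out to be nonnegative. In fact the same computation shows the orthogonality identity
\[
\int_{I_n}\norm{\nabla(\vphi-\vphi^n)}^2\,\dd t = \int_{I_n}\norm{\nabla\vphi}^2\,\dd t - \tau_n\norm{\nabla\vphi^n}^2 \leq \int_{I_n}\norm{\nabla\vphi}^2\,\dd t,
\]
which will be useful for the next step.

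For \eqref{eq:approx_1}, the key observation is that $\vphi-\vphi^n \in L^2(I_n;L^2(\Om))$ lies in both $L^2(I_n;H^1_0(\Om))$ and $L^2(I_n;H^{-1}(\Om))$, so I can bound the $L^2$-norm pointwise in $t$ by the dual pairing $\pair{\vphi-\vphi^n}{\vphi-\vphi^n}$ and then estimate it by $\norm{\vphi-\vphi^n}_{H^{-1}(\Om)}\,\norm{\nabla(\vphi-\vphi^n)}$. Applying Cauchy--Schwarz in time then gives
\[
\int_{I_n}\norm{\vphi-\vphi^n}^2\,\dd t \leq \left(\int_{I_n}\norm{\vphi-\vphi^n}_{H^{-1}(\Om)}^2\,\dd t\right)^{\frac{1}{2}}\left(\int_{I_n}\norm{\nabla(\vphi-\vphi^n)}^2\,\dd t\right)^{\frac{1}{2}}.
\]
The second factor is controlled by $\int_{I_n}\norm{\nabla\vphi}^2\,\dd t$ thanks to the orthogonality identity from the previous step.

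The main (and only) technical point is the first factor, for which I would invoke the Wirtinger/Poincaré inequality for Hilbert-space-valued functions: since $\vphi-\vphi^n$ has zero time-mean on $I_n$ and its distributional derivative equals $\p_t\vphi \in L^2(I_n;H^{-1}(\Om))$, the standard scalar Wirtinger inequality applied to the real-valued function $t\mapsto \norm{\vphi(t)-\vphi^n}_{H^{-1}(\Om)}$ via a Fourier/cosine expansion in $H^{-1}(\Om)$ yields
\[
\int_{I_n}\norm{\vphi-\vphi^n}_{H^{-1}(\Om)}^2\,\dd t \leq \frac{\tau_n^2}{\pi^2}\int_{I_n}\norm{\p_t\vphi}_{H^{-1}(\Om)}^2\,\dd t,
\]
with the sharp constant $\tau_n/\pi$. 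Inserting this into the bound above and simplifying gives \eqref{eq:approx_1}. The hardest part is simply to justify the Hilbert-space-valued Wirtinger inequality cleanly, which is standard and can be done by a cosine series expansion on $I_n$ with coefficients in $H^{-1}(\Om)$.
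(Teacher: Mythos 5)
Your proof is correct, but it takes a genuinely different route from the paper's. The paper expands $\vphi$ in an eigenbasis of the Dirichlet Laplacian, $\vphi=\sum_k\alpha_k\psi_k$ with $(\psi_k,\psi_j)=\delta_{kj}$ and $(\nabla\psi_k,\nabla\psi_j)=\lambda_k\delta_{kj}$, applies the scalar Poincar\'e--Wirtinger inequality to each coefficient in the multiplicative form $\norm{\alpha_k-\overline{\alpha_k}}_{L^2(I_n)}^2\leq\tfrac{\tau_n}{\pi}\abs{\alpha_k}_{H^1(I_n)}\norm{\alpha_k}_{L^2(I_n)}$, and concludes with a Cauchy--Schwarz over the modes weighted by $\lambda_k^{\pm1}$, which reassembles exactly $\norm{\p_t\vphi}_{H^{-1}(\Om)}$ and $\norm{\nabla\vphi}$. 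You instead work pointwise in time with the elliptic interpolation inequality $\norm{w}^2\leq\norm{w}_{H^{-1}(\Om)}\norm{\nabla w}$ for $w\in H^1_0(\Om)$, apply Cauchy--Schwarz in time, and then treat the two factors separately: the gradient factor by the temporal Pythagoras identity, and the $H^{-1}$ factor by a Hilbert-space-valued Wirtinger inequality on $I_n$. Both routes yield the sharp constant $\tau_n/\pi$ and are of comparable length; yours cleanly separates the spatial (duality) and temporal (Wirtinger) ingredients and avoids the spectral theorem for the Laplacian, at the price of justifying the vector-valued Wirtinger inequality. On that last point, one clarification is needed: the inequality must be proved, as you suggest at the end, by a cosine expansion with $H^{-1}(\Om)$-valued coefficients, and \emph{not} by applying the scalar Wirtinger inequality to the real-valued function $t\mapsto\norm{\vphi(t)-\vphi^n}_{H^{-1}(\Om)}$ --- that function is nonnegative and generally does not have zero time-mean (a mean-zero vector-valued function can have constant norm), so the scalar reduction fails. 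With that phrasing tightened, your argument is complete.
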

\begin{proof}
The bound \eqref{eq:approx_2} is simply the stability of the $L^2$-projection with respect to time; thus it remains only to show \eqref{eq:approx_1}.
It is well-known that there exists a maximal sequence $\{\psi_k\}_{k=1}^\infty$ that is orthonormal in the $L^2(\Om)$-inner product and orthogonal in the $H^1_0(\Om)$ inner product: i.e.\ $(\psi_k,\psi_j)=\delta_{kj}$ and $(\nabla\psi_k,\nabla\psi_j) = \lambda_{k} \delta_{kj}$, with $\{\lambda_{k}\}_{k=1}^\infty \subset \R_{>0}$. Then, we have $\vphi = \sum_{k=1}^{\infty} \alpha_k \psi_k$ and $\vphi^n = \sum_{k=1}^\infty \overline{\alpha_k} \psi_k$, with real-valued $\alpha_k \in H^1(I_n)$ and $\overline{\alpha_k} = \frac{1}{\tau_n}\int_{I_n} \alpha_k \dd t$. Thus we may use the Poincar\'e inequality for real-valued functions to obtain
\[
\begin{split}
\int_{I_n}\norm{\vphi -\vphi^n}^2\dd t &= \sum_{k=1}^\infty \norm{\alpha_k-\overline{\alpha_k}}_{L^2(I_n)}^2 \leq \frac{\tau_n}{\pi} \sum_{k=1}^\infty \abs{\alpha_k}_{H^1(I_n)}\norm{\alpha_k}_{L^2(I_n)}
 \\ & \leq \frac{\tau_n}{\pi}\left(\sum_{k=1}^\infty \frac{1}{\lambda_k}\abs{\alpha_k}_{H^1(I_n)}^2 \right)^{\frac{1}{2}}\left(\sum_{k=1}^\infty \lambda_k \norm{\alpha_k}_{L^2(I_n)}^2  \right)^{\frac{1}{2}}.
\end{split}
\]
We then deduce \eqref{eq:approx_1} from the identities $\int_{I_n}\norm{\p_t \vphi}_{H^{-1}(\Om)}^2\dd t = \sum_{k=1}^\infty \frac{1}{\lambda_k}\abs{\alpha_k}_{H^1(I_n)}^2$ and $\int_{I_n}\norm{\nabla \vphi}^2\dd t = \sum_{k=1}^\infty \lambda_k \norm{\alpha_k}_{L^2(I_n)}^2$.
\end{proof}

\begin{lemma}\label{lem:Xnorm_data_oscillation}
Let $f\in L^2(0,T;L^2(\Om))$, let $f_{\th}$ be defined by \eqref{eq:f_discrete_approx}, and let $\vphi \in \LH\cap \HHm$ be an arbitrary function. Then, for each $1\leq n \leq N$,
\begin{equation}\label{eq:X_data_oscillation_estimate}
\begin{aligned}
\Labs{\int_{I_n} (f-f_{\th},\vphi)\,\dd t }  \leq \etaOscTh \left(
\int_{I_n} \norm{\p_t \vphi}_{H^{-1}(\Om)}^2+\norm{\nabla \vphi}^2 \,\dd t \right)^{\frac{1}{2}} .
\end{aligned}
\end{equation}
\end{lemma}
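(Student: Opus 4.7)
The plan is to replace $\vphi$ in the pairing by its difference from a well-chosen approximation that is orthogonal to $f-f_{\th}$, and then exploit a weighted elementwise Cauchy-Schwarz together with the spatial Poincar\'e-Wirtinger inequality and the temporal bound~\eqref{eq:approx_1}.

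First, since $\{\psia\}_{\ver\in\calVh}$ is a partition of unity on $\Om$, the definition~\eqref{eq:f_discrete_approx} gives $f - f_{\th} = \sum_{\ver\in\calVh}\psia(f - \Pia f)$ on $\Om\times I_n$. The defining orthogonality of the weighted projection $\Pia$, combined with the fact that $p_{\ver}\ge 1$ so that $\calP_0(\Ta) \subset \Pal$, implies that $\int_{I_n}(f - f_{\th}, q)\,\dd t = 0$ for every $q$ that is constant in time and piecewise constant in space on $\CR$. Let $\vphi^n \coloneqq \frac{1}{\tau_n}\int_{I_n}\vphi\,\dd t \in H^1_0(\Om)$ and let $\overline{\vphi^n}$ denote the piecewise $L^2$-projection of $\vphi^n$ onto $\calP_0(\CR)$. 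The orthogonality then yields
\[
\int_{I_n}(f-f_{\th},\vphi)\,\dd t = \int_{I_n}(f-f_{\th}, \vphi - \overline{\vphi^n})\,\dd t.
\]

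Second, I would apply an elementwise weighted Cauchy-Schwarz with weight $w_{\elCR}\coloneqq \tau_n/\pi + h_{\elCR}^2/\pi^2$ on each $\elCR\in\CR$, giving
\[
\Labs{\int_{I_n}(f - f_{\th}, \vphi - \overline{\vphi^n})\,\dd t}^2 \le \biggl(\int_{I_n}\sum_{\elCR\in\CR} w_{\elCR}\norm{f - f_{\th}}^2_{\elCR}\,\dd t\biggr)\biggl(\sum_{\elCR\in\CR}\frac{\norm{\vphi - \overline{\vphi^n}}^2_{L^2(I_n;L^2(\elCR))}}{w_{\elCR}}\biggr).
\]
Setting $a \coloneqq \int_{I_n}\norm{\p_t\vphi}^2_{H^{-1}(\Om)}\,\dd t$ and $b \coloneqq \int_{I_n}\norm{\nabla\vphi}^2\,\dd t$, the first factor above equals $\frac{2}{1+\sqrt{2}}[\etaOscTh]^2$ by definition. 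For the second factor, I would exploit the local-in-space temporal $L^2$-orthogonality $\norm{\vphi - \overline{\vphi^n}}^2_{L^2(I_n;L^2(\elCR))} = \norm{\vphi - \vphi^n}^2_{L^2(I_n;L^2(\elCR))} + \tau_n\norm{\vphi^n - \overline{\vphi^n}}^2_{\elCR}$, the Payne-Weinberger Poincar\'e-Wirtinger inequality $\norm{v - \overline{v}}_{\elCR} \le \frac{h_{\elCR}}{\pi}\norm{\nabla v}_{\elCR}$ on each convex simplex~$\elCR$, the trivial estimates $\frac{1}{w_{\elCR}} \le \pi/\tau_n$ and $\frac{\tau_n h_{\elCR}^2/\pi^2}{w_{\elCR}} \le \tau_n$, and finally~\eqref{eq:approx_1}--\eqref{eq:approx_2}, to obtain $\sum_{\elCR\in\CR}\frac{1}{w_{\elCR}}\norm{\vphi - \overline{\vphi^n}}^2_{L^2(I_n;L^2(\elCR))} \le \sqrt{ab} + b$.

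The closing ingredient is the sharp scalar inequality $\sqrt{ab} + b \le \frac{1+\sqrt{2}}{2}(a+b)$, whose optimal constant is obtained by maximising $\frac{x+1}{x^2+1}$ over $x = \sqrt{a/b}\ge 0$, with the maximum attained at $x = \sqrt{2}-1$. Multiplying the two factors then yields $\bigl|\int_{I_n}(f - f_{\th},\vphi)\,\dd t\bigr|^2 \le [\etaOscTh]^2(a+b)$, which is the desired bound. I expect the main technical obstacle to be identifying the precise weight $w_{\elCR}$ in Cauchy-Schwarz so that the two contributions combine exactly into $\sqrt{ab}+b$, and recognising the sharp scalar bound $\sqrt{ab}+b \le \frac{1+\sqrt{2}}{2}(a+b)$ needed to match the explicit constant in $\etaOscTh$; the individual ingredients (partition-of-unity orthogonality, Payne-Weinberger, time-Poincar\'e) are standard.
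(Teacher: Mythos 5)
Your proof is correct and follows essentially the same route as the paper: the same orthogonality of $f-f_{\th}$ to space--time piecewise constants, the same two mean-value corrections in time and space bounded via \eqref{eq:approx_1}--\eqref{eq:approx_2} and the elementwise Poincar\'e inequality, and the same sharp scalar inequality $\sqrt{ab}+b\le\frac{1+\sqrt 2}{2}(a+b)$ producing the constant in $\etaOscTh$. The only difference is organizational: the paper bounds the two contributions $A$ and $B$ separately and recombines them by Cauchy--Schwarz, whereas you fold everything into a single weighted elementwise Cauchy--Schwarz with a Pythagoras split, which yields the identical final estimate.
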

\begin{proof}
For a given function $\vphi\in \LH\cap \HHm$, we define $\vphi^n$ the time-mean value of $\vphi$ over $I_n$ as $\vphi^n \coloneqq \tfrac{1}{\tau_n}\int_{I_n} \vphi\,\dd t\in H^1_0(\Om)$, and we define $\vphi^n_{\elCR}$ the space-mean value of $\vphi^n$ over $\elCR$ as $\vphi^n_{\elCR}|_{\elCR} \coloneqq \tfrac{1}{\abs{\elCR}}\int_{\elCR}\vphi^n \dd x$, where $1\leq n\leq N$ and $\elCR\in\CR$.
Now, we note that the definition of $f_{\th}$ in \eqref{eq:f_discrete_approx} implies that $f-f_{\th}$ has zero mean value over each space-time element $\elCR\times I_n$. Therefore, we obtain
\[
\int_{I_n}(f-f_{\th},\vphi)\,\dd t = \int_{I_n}(f-f_{\th},\vphi-\vphi^n) + \sum_{\elCR\in\CR} (f-f_{\th},\vphi^n-\vphi^n_{\elCR})_{\elCR}\,\dd t\eqqcolon A+B.
\]
Then, we apply the bounds \eqref{eq:approx_2} and \eqref{eq:approx_1} to obtain
\begin{align*}
\abs{A} &\leq \left(\int_{I_n} \sum_{\elCR\in\CR} \frac{\tau_{n}}{\pi} \norm{f-f_{\th}}_{\elCR}^2\,\dd t\right)^{\frac{1}{2}}  \left(\int_{I_n}\norm{\p_t \vphi}^2_{H^{-1}(\Om)}\,\dd t\right)^{\frac{1}{4}}\left(\int_{I_n}\norm{\nabla \vphi}^2\,\dd t \right)^{\frac{1}{4}},
\\
\abs{B} &\leq \left(\int_{I_n} \sum_{\elCR\in\CR} \frac{h^2_{\elCR}}{\pi^2} \norm{f-f_{\th}}_{\elCR}^2\,\dd t\right)^{\frac{1}{2}} \left(\int_{I_n}\norm{\nabla \vphi}^2\,\dd t\right)^{\frac{1}{2}}.
\end{align*}
Then, the Cauchy--Schwarz inequality and the Young inequality $ ab + b^2 \leq \frac{1+\sqrt{2}}{2}(a^2 + b^2)$ for all $a$, $b\in \R$, imply that the bound \eqref{eq:X_data_oscillation_estimate} holds.
\end{proof}

\paragraph{Proof of the upper bound \eqref{eq:Xnorm_guaranteed_upper}.}
Recall from~\eqref{eq:X_error_residual_equivalence} on the equivalence of norms and residuals that $\norm{u-u_{\th}}_X=\norm{\calR_X(u_{\th})}_{[Y_T]'}$, so we turn our attention to bounding $\pair{\calR_X(u_{\th})}{\vphi}$ for an arbitrary test function $\vphi \in Y_T$.
By adding and subtracting $\int_{0}^T (\p_t \Uth + \nabla\cdot \sth,\vphi)\,\dd t$ and recalling the flux equilibration identity~\eqref{eq:sigma_th_equilibration}, we get\begin{multline}\label{eq:Xnorm_upper_1}
\pair{\calR_X(u_{\th})}{\vphi} = \int_{0}^T(f-f_{\th},\vphi)+\pair{\p_t \vphi}{u_{\th}-\Uth}-(\sth+\nabla u_{\th},\nabla \vphi)\,\dd t \\+ (u_0-\Pi_h u_0,\vphi(0)),
\end{multline}
where we have used integration by parts with respect to time for the time derivative $\p_t \Uth$, noting that $\Uth(0)=\Pi_h u_0$ and that $\vphi(T)=0$, and also where we have used integration by parts over $\Om$ for the flux $\sth \in L^2(0,T;H(\Div,\Om))$. Employing the shorthand notation $\norm{\vphi}_{Y(I_n)}^2\coloneqq \int_{I_n}\norm{\p_t\vphi}_{H^{-1}(\Om)}^2 +\norm{\nabla\vphi}^2\,\dd t$, we then use Lemma~\ref{lem:Xnorm_data_oscillation} and the Cauchy--Schwarz inequality to bound 
\begin{equation}\label{eq:Xnorm_upper_2}
\begin{split}
& \int_{0}^T(f-f_{\th},\vphi)+\pair{\p_t \vphi}{u_{\th}-\Uth}-(\sth+\nabla u_{\th},\nabla \vphi)\,\dd t
\\ & \leq \sum_{n=1}^N \left\{\left[\int_{I_n}  \norm{\sth + \nabla u_{\th}}^2+ \norm{\nabla(u_{\th}-\Uth)}^2\,\dd t  \right]^{\frac{1}{2}} +  \etaOscTh  \right\}\norm{\vphi}_{Y(I_n)} 
\\ & = \sum_{n=1}^N \left\{\left[\sum_{K\in\calT^n} \left\{ [\etaEq]^2+[\etaJ]^2\right\}\right]^{\frac{1}{2}} + \etaOscTh \right\} \norm{\vphi}_{Y(I_n)}.
\end{split}
\end{equation}
We then combine \eqref{eq:Xnorm_upper_1} and \eqref{eq:Xnorm_upper_2} with the Cauchy--Schwarz inequality to find that $\pair{\calR_X(u_{\th})}{\vphi} \leq \eta_X \norm{\vphi}_{Y_T}$; since $\vphi\in Y_T$ was arbitrary, we obtain $\norm{u-u_{\th}}_X\leq \eta_X$ as a result of \eqref{eq:X_error_residual_equivalence}, thereby completing the proof of \eqref{eq:Xnorm_guaranteed_upper}. \qed

\section{Proof of the bounds~\eqref{eq:Xnorm_local_efficiency} and \eqref{eq:Xnorm_global_efficiency}}\label{sec:efficiency}

We start by observing that $\sth|_{K\times I_n} = \sum_{\ver\in\calV_K} \stha|_{K\times I_n}$, and thus
\begin{equation}
\int_{I_n} [\etaEq]^2 \,\dd t = \int_{I_n} \norm { {\textstyle\sum}_{\ver\in \calV_K} (\stha + \psia \nabla u_{\th})}_{K}^2\,\dd t  \leq \abs{\calV_K}   \sum_{\ver\in\calV_K} \int_{I_n}\norm{\stha + \psia \nabla u_{\th}}_K^2 \,\dd t, \label{eq:X_norm_local_efficiency_1}
 \end{equation}
where we recall that $\calV_K$ stands for the vertices of the element $K$ and $\abs{\calV_K}$ stands for its cardinality.
We shall now bound the right-hand side of \eqref{eq:X_norm_local_efficiency_1}.
For each $1\leq n\leq N$ and each $\ver \in \calVh$, we introduce the patch residual functional $\Ra \colon L^2(I_n,H^1_0(\oma))\tends\R$ defined by
\begin{equation}\label{eq:patch_residual}
\begin{aligned}
\pair{\Ra}{\vphi}  =\int_{I_n} \big(\Pia f - \p_t \Uth ,\vphi\big)_{\oma}  -\big(\nabla u_{\th},\nabla \vphi \big)_\oma\dd t & & & \forall\,\vphi \in L^2(I_n;H^1_0(\oma)).
\end{aligned}
\end{equation}

The essential result that forms the starting point for our analysis is the following abstract efficiency result first shown in \ocite[Lemma~8.2]{ErnSmearsVohralik2016}, which is an application of a more general underlying key result in~\ocite[Theorem~1.2]{ErnSmearsVohralik2016c} concerning the existence of polynomial-degree robust liftings of piecewise polynomial data into discrete subspaces of $\Hdiv$, which itself is based on the fundamental results of \ccite{CostabelMcintosh2010,BraessPillweinSchoberl2009}. 
\begin{lemma}[Space-time stability bound]\label{lem:flux_reconstruction_stability}
Let $\stha$ denote the patch-wise flux reconstructions of Definition~{\upshape\ref{def:flux_construction_1}}, and let $\Ra$ denote the local patch residual defined by \eqref{eq:patch_residual}. Then, we have
\begin{equation}\label{eq:flux_reconstruction_stability}
\left(\int_{I_n}\norm{\stha + \psia \nabla u_{\th}}_\oma^2\,\dd t\right)^{\frac{1}{2}}  \lesssim \sup_{\vphi \in \calQ_{q_n}(I_n;H^1_0(\oma))\setminus\{0\}} \frac{\pair{\Ra}{\vphi}}{\left(\int_{I_n}\norm{\nabla \vphi}_{\oma}^2\,\dd t\right)^{\frac{1}{2}}},
\end{equation}
where $\calQ_{q_n}(I_n;H^1_0(\oma))$ denotes the space of $H^1_0(\oma)$-valued univariate polynomials of degree at most $q_n$ on $I_n$. In particular, the constant in \eqref{eq:flux_reconstruction_stability} does not depend on the mesh-size, time-step size, spatial and temporal polynomial degrees, or on refinement and coarsening between time-steps.
\end{lemma}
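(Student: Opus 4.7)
The plan is to reduce \eqref{eq:flux_reconstruction_stability} to the polynomial-degree-robust stability of equilibrated flux reconstruction over a single vertex patch in the elliptic setting, which is by now standard in this line of work. First, I would recast the constrained quadratic minimization \eqref{eq:stha_minimization_def} as a mixed saddle-point problem for $\stha$ and a Lagrange multiplier in $\calQ_{q_n}(I_n;\Pal)$, with the mean-value adjustment for interior vertices afforded by \eqref{eq:gthan_mean_value_zero}; the space-time left-hand side of \eqref{eq:flux_reconstruction_stability} is then characterized by convex duality as the dual norm of the data $(\btaua,\gtautha)$ against a suitable discrete test space.

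Second, I would exploit the tensor-product structure $\Vthan = \calQ_{q_n}(I_n;\Va)$ together with the $L^2(I_n)$-orthogonality of the Legendre polynomials on $I_n$ to decouple both the objective and the divergence constraint mode-by-mode in time: the $q$-th Legendre coefficient of $\stha$ solves an independent stationary mixed problem on $\Va$, driven by the $q$-th Legendre coefficients of $\btaua$ and $\gtautha$. For each such mode, the $p$-robust elliptic stability bound of \ocite[Theorem~1.2]{ErnSmearsVohralik2016c}, which extends \ccite{BraessPillweinSchoberl2009,CostabelMcintosh2010} and rests on stable $\Hdiv$-conforming liftings of piecewise polynomial data on a vertex patch, bounds the $L^2(\oma)$-norm of the $q$-th mode of $\stha + \btaua$ by the dual of a functional acting on $H^1_0(\oma)$ test functions, with constant independent of the spatial polynomial degrees $p_\ver$. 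Squaring and summing over $q$ via Parseval then yields a space-time bound in which the supremum is taken over $\calQ_{q_n}(I_n;H^1_0(\oma))$, the constant remaining independent of $q_n$.

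Third, I would identify the resulting numerator with $\pair{\Ra}{\vphi}$. Substituting the definitions \eqref{eq:tau_g_def}, applying the product rule $\nabla(\psia\vphi) = \psia\nabla\vphi + \vphi\nabla\psia$, and using the defining property of the $\psia$-weighted projection $\Pia$ against test functions in $\calQ_{q_n}(I_n;H^1_0(\oma))$, the functional $\vphi \mapsto \int_{I_n}\bigl[(\gtautha,\vphi)_\oma - (\btaua,\nabla\vphi)_\oma\bigr]\,\dd t$ collapses to $\pair{\Ra}{\vphi}$, closing the chain.

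The main obstacle is the $p$-robustness of the elliptic stability estimate, which is the nontrivial ingredient supplied by \ocite[Theorem~1.2]{ErnSmearsVohralik2016c}; once this elliptic input is available, independence from the temporal polynomial degree $q_n$ follows essentially for free from the Legendre decoupling, and the identification of the test space $\calQ_{q_n}(I_n;H^1_0(\oma))$ with the natural dual space produced by the mixed saddle-point system is routine bookkeeping that uses the structure of $\btaua$, $\gtautha$, and $\Pia$. A secondary subtlety is the separate treatment of interior versus boundary vertices, where the boundary conditions on $\Va$ and the compatibility condition \eqref{eq:gthan_mean_value_zero} must be matched against the correct duality pairing, but this is already handled in a unified way by the cited elliptic stability theorem.
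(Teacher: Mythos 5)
The paper does not prove this lemma at all: it quotes it verbatim from \ocite[Lemma~8.2]{ErnSmearsVohralik2016}, and your sketch follows essentially the same route as that cited proof (Legendre decoupling of the space-time minimization into $q_n+1$ stationary patch problems, the $p$-robust elliptic stability of \ocite[Theorem~1.2]{ErnSmearsVohralik2016c}, and identification of the data $(\tautha,\gtautha)$ with the patch residual). One small imprecision in your third step: substituting \eqref{eq:tau_g_def} and the product rule gives $(\gtautha,\vphi)_{\oma}-(\tautha,\nabla\vphi)_{\oma}=\pair{\Ra}{\psia\vphi}$ for $\vphi$ in the natural elliptic test space $\Hstar$, not $\pair{\Ra}{\vphi}$ directly; passing to the supremum over $H^1_0(\oma)$ requires the additional cut-off stability bound $\norm{\nabla(\psia\vphi)}_{\oma}\lesssim\norm{\nabla\vphi}_{\oma}$ (a Poincar\'e-type estimate on the patch with constant depending only on shape regularity), which is standard but should be stated rather than absorbed into ``routine bookkeeping.''
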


As explained above in the introduction, our analysis of the efficiency of the equilibrated flux estimator $\etaEq$ relies on two original ideas. We now detail the first one, which is based on the key observation that the set of test functions appearing in \eqref{eq:flux_reconstruction_stability} are polynomials with respect to the time variable. Hence, in order to obtain estimates on the efficiency of the estimators with respect to the $X$-norm of the error, we shall show that the set of test functions appearing in~\eqref{eq:flux_reconstruction_stability} can be restricted to functions vanishing at the end-points of the time interval and thereby lying in the test space $\YT$ through a bubble-in-time argument, provided that $h_{\oma}^2 \lesssim \tau_n$.

We start by defining the space $\Hstar$ through
\begin{equation}\label{eq:Hstar}
\Hstar 
\coloneqq \begin{cases}
 \{ v \in H^1(\oma), \quad (v,\psia)_{\oma}=0 \} & \text{if }\ver\in\calVhint, \\
 \{ v \in H^1(\oma), \quad \eval{v}{\p\oma \cap \DO} =0 \} & \text{if }\ver\in\calVhext.
 \end{cases}
\end{equation}
Recall that the dual norm $\norm{\cdot}_{\Hneg}$ of $\Hstar$ is defined by $\norm{\Phi}_{\Hneg}=\sup\pair{\Phi}{v}$, where the supremum is taken among all test functions $v\in \Hstar$ such that $\norm{\nabla v}_{\oma}=1$.
The motivation for working with the space $\Hstar$ is that the $\psia$-weighted mean value of the function $u-\Uth$ possesses special properties derived from the numerical scheme; in particular, see Lemma~\ref{lem:time_dg_exact} and the discussion surrounding~\eqref{eq:auxiliary_error_function} below.

\begin{lemma}[Stability with test functions vanishing at both endpoints of $I_n$]\label{lem:X_norm_main_estimate}
Let $\ver\in\calT^n$, $1\leq n \leq N$, and suppose that there exists a constant $\gamma$ such that the patch diameter $h_{\oma}$ and $\tau_n$ satisfy $ h_{\oma}^2/\tau_n \leq \gamma_{\ver} $. Then,
\ifIMA
\begin{equation}\label{eq:X_norm_efficiency_main_estimate}
\left(\int_{I_n}\norm{\stha + \psia \nabla u_{\th}}_\oma^2\,\dd  t\right)^{\frac{1}{2}} \leq C_{\gamma_{\ver},q_n} \sup_{\substack{\vphi \in \calQ_{q_n+2}(I_n;H^1_0(\oma)) \\ \cap H^1_0(I_n;H^1_0(\oma))}} \frac{\pair{\Ra}{\vphi}}{\left( \int_{I_n}\norm{\p_t \vphi}_{\Hneg}^2+\norm{\nabla \vphi}_{\oma}^2\,\dd  t\right)^{\frac{1}{2}}},
\end{equation}
\else
\begin{multline}\label{eq:X_norm_efficiency_main_estimate}
\left(\int_{I_n}\norm{\stha + \psia \nabla u_{\th}}_\oma^2\,\dd  t\right)^{\frac{1}{2}} \\  \leq C_{\gamma_{\ver},q_n} \sup_{\substack{\vphi \in \calQ_{q_n+2}(I_n;H^1_0(\oma)) \\ \cap H^1_0(I_n;H^1_0(\oma))}} \frac{\pair{\Ra}{\vphi}}{\left( \int_{I_n}\norm{\p_t \vphi}_{\Hneg}^2+\norm{\nabla \vphi}_{\oma}^2\,\dd  t\right)^{\frac{1}{2}}},
\end{multline}
\fi
where $H^1_0(I_n;H^1_0(\oma))$ denotes the space of functions in $H^1(I_n;H^1_0(\oma))$ that vanish at both endpoints $t_{n-1}$ and $t_n$ of the time interval $I_n$. In particular, the constant $C_{\gamma_{\ver},q_n}$ in~\eqref{eq:X_norm_efficiency_main_estimate} satisfies $C_{\gamma_{\ver},q_n} \lesssim (q_n+1)^{\frac{1}{2}} + \gamma_{\ver} (q_n+1)^{\frac{5}{2}}$, and may depend on the shape regularity of $\calT^n$ and $\CR$ and on the space dimension~$\dim$, but otherwise does not depend on the mesh-size, time-step size, spatial polynomial degrees, or on refinement and coarsening between time-steps.
\end{lemma}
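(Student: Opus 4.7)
The strategy is a duality argument combined with an explicit bubble-in-time construction, used to upgrade Lemma~\ref{lem:flux_reconstruction_stability} from its polynomial-in-time test space $\calQ_{q_n}(I_n;H^1_0(\oma))$ to the smaller space on the right-hand side of \eqref{eq:X_norm_efficiency_main_estimate}. Let $b_n(t)\coloneqq 4(t-t_{n-1})(t_n-t)/\tau_n^2$ denote the quadratic temporal bubble on $I_n$, satisfying $b_n(t_{n-1})=b_n(t_n)=0$, $0\leq b_n\leq 1$, and $\|b_n'\|_{L^\infty(I_n)}\lesssim \tau_n^{-1}$. Every function $\psi\in \calQ_{q_n+2}(I_n;H^1_0(\oma))\cap H^1_0(I_n;H^1_0(\oma))$ factors uniquely as $\psi = b_n\,\eta$ with $\eta\in\calQ_{q_n}(I_n;H^1_0(\oma))$, so producing a single good choice of $\eta$ is enough to bound the supremum from below.

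The key idea is to take $\eta = R^\star$, the Riesz representer of $\Ra$ on $\calQ_{q_n}(I_n;H^1_0(\oma))$ equipped with the inner product $(v,w)_V \coloneqq \int_{I_n}(\nabla v,\nabla w)_\oma\,\dd t$. Then $\pair{\Ra}{\vphi}=\int_{I_n}(\nabla R^\star,\nabla\vphi)_\oma\,\dd t$ for every $\vphi\in \calQ_{q_n}(I_n;H^1_0(\oma))$, and $\|R^\star\|_V$ equals precisely the supremum on the right-hand side of~\eqref{eq:flux_reconstruction_stability}, which by Lemma~\ref{lem:flux_reconstruction_stability} controls the left-hand side of~\eqref{eq:X_norm_efficiency_main_estimate}. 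Taking $\psi \coloneqq b_n R^\star$, the Riesz identity yields the pivotal positive representation
\[
\pair{\Ra}{\psi} = \int_{I_n} b_n(t)\,\|\nabla R^\star(t,\cdot)\|_\oma^2\,\dd t \gtrsim \frac{1}{q_n+1}\,\|R^\star\|_V^2,
\]
where the lower bound follows from a weighted Markov--Bernstein-type inequality applied to the nonnegative polynomial $t\mapsto \|\nabla R^\star(t,\cdot)\|_\oma^2$ of degree at most $2q_n$ (obtained by expanding $\nabla R^\star$ in an $H^1_0(\oma)$-orthonormal basis and handling each scalar component).

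It then remains to bound $\|\psi\|_W^2\coloneqq \int_{I_n}\|\p_t\psi\|_{\Hneg}^2+\|\nabla\psi\|_\oma^2\,\dd t$ in terms of $\|R^\star\|_V$. I would expand $\p_t\psi = b_n' R^\star + b_n\,\p_t R^\star$ and estimate each piece using: the elementary inequality $\|w\|_{\Hneg}\lesssim h_\oma\|w\|_\oma$ for $w\in L^2(\oma)$ (from Poincaré's inequality on $\Hstar$); the pointwise-in-time Poincaré inequality on $H^1_0(\oma)$; the bounds $|b_n|\leq 1$ and $|b_n'|\lesssim\tau_n^{-1}$; the weighted polynomial inverse estimate $\int_{I_n} b_n(p')^2\,\dd t \lesssim q_n(q_n+1)\tau_n^{-2}\int_{I_n} p^2\,\dd t$ for $p\in \calQ_{q_n}(I_n)$, applied componentwise in the $H^1_0(\oma)$-orthonormal basis; and the hypothesis $h_\oma^2\leq \gamma_\ver\tau_n$ to convert the surviving $h_\oma^2\tau_n^{-1}$ factors into powers of $\gamma_\ver$. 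Combining these yields $\|\psi\|_W \lesssim (1+\gamma_\ver(q_n+1)^\alpha)\,\|R^\star\|_V$ for some $\alpha$; dividing the two bounds and invoking Lemma~\ref{lem:flux_reconstruction_stability} closes the argument. The main obstacle is sharp bookkeeping of the polynomial-degree dependence: matching the advertised scaling $C_{\gamma_\ver,q_n}\lesssim (q_n+1)^{1/2}+\gamma_\ver(q_n+1)^{5/2}$ forces the use of the sharp Markov--Bernstein inequality stemming from the Legendre-polynomial ODE on $[-1,1]$ (rather than cruder $L^\infty$-Markov bounds), combined with a delicate balancing of the Poincaré constant on $\oma$ against the weighted temporal inverse estimate so that spatial decay and temporal blow-up cancel correctly through the hypothesis $h_\oma^2\lesssim \tau_n$.
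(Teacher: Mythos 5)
Your overall strategy---lower-bounding the new supremum by exhibiting a single well-chosen test function built from the Riesz representer $R^\star$ of $\Ra$---is legitimate and genuinely different from the paper's, and it would prove \eqref{eq:X_norm_efficiency_main_estimate} with \emph{some} $q_n$-dependent constant. But it cannot deliver the growth rate $C_{\gamma_{\ver},q_n}\lesssim (q_n+1)^{1/2}+\gamma_{\ver}(q_n+1)^{5/2}$, which is part of the statement and is quoted in Theorem~\ref{thm:X_norm_guaranteed_efficiency}. The decisive step is your claimed bound $\int_{I_n}b_n\norm{\nabla R^\star}_{\oma}^2\,\dd t\gtrsim (q_n+1)^{-1}\int_{I_n}\norm{\nabla R^\star}_{\oma}^2\,\dd t$. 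This is false: the sharp constant $\mu_{q_n}$ in $\int_{I_n}b_n c^2\,\dd t\geq \mu_{q_n}\int_{I_n}c^2\,\dd t$ for $c\in\calQ_{q_n}(I_n;\R)$ is the smallest eigenvalue of multiplication by the bubble on $\calQ_{q_n}$, and a short computation with the eigenvalue equation shows $\mu_{q_n}=1-r^2$ with $r$ the largest root of the Legendre polynomial of degree $q_n+2$ (so $\mu_{q_n}=2/3,\,2/5,\,0.2584,\dots$ for $q_n=0,1,2$), i.e.\ $\mu_{q_n}\asymp(q_n+1)^{-2}$; the extremal polynomial concentrates its mass near the endpoints of $I_n$, exactly where $b_n$ vanishes. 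Inserting the correct $\mu_{q_n}$ together with your denominator estimate (which gives at best $\int_{I_n}\norm{\p_t\psi}_{\Hneg}^2+\norm{\nabla\psi}_{\oma}^2\,\dd t\lesssim(1+\gamma_{\ver}^2(q_n+1)^4)\int_{I_n}\norm{\nabla R^\star}_{\oma}^2\,\dd t$) yields $C_{\gamma_{\ver},q_n}\lesssim(q_n+1)^2+\gamma_{\ver}(q_n+1)^4$, a loss of $(q_n+1)^{3/2}$. This is not a bookkeeping issue that a sharper Markov--Bernstein inequality can repair: any multiplicative bubble damps the numerator $\pair{\Ra}{b_n\eta}$ by the factor $\mu_{q_n}$ in the worst case, so the loss is intrinsic to the choice $\eta=R^\star$. (A smaller point: $\pair{\Ra}{b_nR^\star}=\int_{I_n}b_n\norm{\nabla R^\star}_{\oma}^2\,\dd t$ does not follow from the Riesz identity alone, since $b_nR^\star\notin\calQ_{q_n}(I_n;H^1_0(\oma))$; it is true, but only because the data entering $\Ra$ are polynomials of degree at most $q_n$ in time, so $\pair{\Ra}{\cdot}$ factors through the self-adjoint $L^2(I_n)$-orthogonal projection onto $\calQ_{q_n}$. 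This needs to be said.)

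The paper avoids the loss by an additive rather than multiplicative correction: for an arbitrary $\vphi\in\calQ_{q_n}(I_n;H^1_0(\oma))$ it sets $\vphi_*\coloneqq\vphi-\vphi(t_{n-1}^+)\tfrac{(-1)^{q_n+1}}{2}(L^n_{q_n+1}-L^n_{q_n+2})-\vphi(t_n)\tfrac12(L^n_{q_n+1}+L^n_{q_n+2})$, which vanishes at both endpoints of $I_n$ and, by $L^2(I_n)$-orthogonality of $L^n_{q_n+1},L^n_{q_n+2}$ to $\calQ_{q_n}$, satisfies $\pair{\Ra}{\vphi_*}=\pair{\Ra}{\vphi}$ \emph{exactly}, so the numerator is not degraded at all. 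The price is paid only in the denominator: a temporal inverse inequality gives $\int_{I_n}\norm{\nabla\vphi_*}_{\oma}^2\,\dd t\lesssim(q_n+1)\int_{I_n}\norm{\nabla\vphi}_{\oma}^2\,\dd t$, and the double Poincar\'e bound $\norm{v}_{\Hneg}\lesssim h_{\oma}^2\norm{\nabla v}_{\oma}$ combined with $h_{\oma}^2\leq\gamma_{\ver}\tau_n$ gives $\int_{I_n}\norm{\p_t\vphi_*}_{\Hneg}^2\,\dd t\lesssim\gamma_{\ver}^2(q_n+1)^5\int_{I_n}\norm{\nabla\vphi}_{\oma}^2\,\dd t$, whence the stated constant. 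In your own framework this amounts to choosing $\eta$ so that $b_n\eta=\vphi_*$ rather than $\eta=R^\star$. If you only need the inequality with an unspecified $q_n$-dependent constant, your argument can be completed once the Riesz-identity step is justified; to prove the lemma as stated, switch to the additive Legendre correction.
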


\begin{proof}
The starting point for the proof is Lemma~\ref{lem:flux_reconstruction_stability}. Keeping in mind the right-hand side of \eqref{eq:flux_reconstruction_stability}, for each $\vphi\in \calQ_{q_n}(I_n;H^1_0(\oma))$, we shall construct a new function $\vphi_*\in \calQ_{q_n+2}(I_n;H^1_0(\oma))$ defined by
\[
\vphi_* \coloneqq \vphi - \vphi(t_{n-1}^+) \frac{(-1)^{q_n+1}}{2}(L^n_{q_n+1}-L^n_{q_n+2}) - \vphi(t_n) \frac{1}{2}(L^n_{q_n+1}+L^n_{q_n+2}).
\]
It follows from the fact that $L^n_q(t_{n-1})=(-1)^q$ and that $L^n_q(t_n)=1$ for all $q\geq 0$ that $\vphi_*(t_{n-1}^+)=\vphi_*(t_n)=0$ and hence $\vphi_* \in H^1_0(I_n;H^1_0(\oma))$. Recalling that the functions $\Pia f$, $ \p_t \Uth$, and $\nabla u_{\th}$ appearing in \eqref{eq:patch_residual} are polynomials of degree at most $q_n$ in time, it also follows from the orthogonality of the Legendre polynomials that
\[
\pair{\Ra}{\vphi_*} = \pair{\Ra}{\vphi}.
\]
It is then seen that we shall obtain \eqref{eq:X_norm_efficiency_main_estimate} as a result of \eqref{eq:flux_reconstruction_stability} provided that we can bound $\int_{I_n}\norm{\p_t \vphi_*}_{\Hneg}^2+\norm{\nabla \vphi_*}_{\oma}^2\,\dd t$ in terms of $\int_{I_n} \norm{\nabla \vphi}_{\oma}^2 \,\dd t$.  First, the triangle inequality and the properties of the Legendre polynomials imply that
\begin{equation}\label{eq:X_norm_efficiency_1}
\int_{I_n}\norm{\nabla\vphi_*}_{\oma}^2\,\dd  t \lesssim \int_{I_n} \norm{\nabla\vphi}_{\oma}^2 \,\dd  t + \tfrac{\tau_n}{q_n+1}\left(\norm{\nabla\vphi(t_{n-1})}_{\oma}^2+\norm{\nabla\vphi(t_n)}_{\oma}^2\right),
\end{equation}
where the constant is independent of all other quantities.
Now, the key point is that we have the inverse inequality
\begin{equation}\label{eq:discrete_inverse_inequalities}
 \max_{t\in I_n}\norm{\nabla\vphi(t)}_{\oma}^2 \lesssim \tfrac{(q_n+1)^2}{\tau_n} \int_{I_n}\norm{\nabla\vphi}_{\oma}^2\,\dd t,
\end{equation}
where the constant is independent of all other quantities since $\vphi\in \calQ_{q_n}(I_n;H^1_0(\oma))$ is discrete with respect to time.
Note in particular that the inverse inequality is valid even though $\calQ_{q_n}(I_n;H^1_0(\oma))$ is itself an infinite dimensional space, see Remark~\ref{rem:inverse_inequality} below.
Therefore, we find from \eqref{eq:X_norm_efficiency_1} and \eqref{eq:discrete_inverse_inequalities} that 
\begin{equation}
\int_{I_n}\norm{\nabla\vphi_*}_{\oma}^2\,\dd t \lesssim (q_n+1) \int_{I_n}\norm{\nabla\vphi}_{\oma}^2\,\dd t.
\end{equation}

To bound $\int_{I_n} \norm{\p_t \vphi_*}^2_{\Hneg}\dd t$, we recall that $\vphi_*(t) \in H^1_0(\oma)$ for all $t\in I_n$, and therefore satisfies the Poincar\'e inequality $\norm{\vphi_*(t)}_{\oma} \lesssim h_{\oma} \norm{\nabla \vphi_*(t)}_{\oma}$ for all $t\in I_n$.
Furthermore, we also have a similar Poincar\'e inequality for all test functions $v\in \Hstar$. Therefore, we find that $\norm{\vphi_*(t)}_{\Hneg} \lesssim h_{\oma}^2 \norm{\nabla \vphi_*(t)}_{\oma}$, for all $t\in I_n$. Thus, we obtain, using an inverse inequality in time,
\begin{multline*}
\int_{I_n}\norm{\p_t\vphi_*}_{\Hneg}^2\,\dd t \lesssim \tfrac{(q_n+1)^4}{\tau_n^2}\int_{I_n}\norm{\vphi_*}_{\Hneg}^2 \,\dd t  
 \\ \lesssim  \tfrac{(q_n+1)^4 h_{\oma}^4 }{\tau_n^2} \int_{I_n}\norm{\nabla \vphi_{*}}_{\oma}^2\,\dd t
  \lesssim \gamma_{\ver}^2 (q_n+1)^5 \int_{I_n}\norm{\nabla \vphi}_{\oma}^2\,\dd  t,
\end{multline*}
where we have used the hypothesis that $ h_{\oma}^2/ \tau_n\leq \gamma_{\ver}$ in the last inequality. Hence, we have shown that
\begin{equation}\label{eq:X_norm_efficiency_2}
\int_{I_n} \norm{\p_t \vphi_*}_{\Hneg}^2 + \norm{\nabla \vphi_*}_{\oma}^2 \,\dd  t \leq C^2_{\gamma_{\ver},q_n} \int_{I_n}\norm{\nabla \vphi}_{\oma}^2\,\dd  t,
\end{equation}
where the constant $C_{\gamma_{\ver},q_n} \lesssim (q_n+1)^{\frac{1}{2}} + \gamma_{\ver} (q_n+1)^{\frac{5}{2}}$.
The bound~\eqref{eq:X_norm_efficiency_main_estimate} then follows from \eqref{eq:X_norm_efficiency_2} and the identity $\pair{\Ra}{\vphi}=\pair{\Ra}{\vphi_*}$ given above.
\end{proof}

\begin{remark}[Inverse inequality]\label{rem:inverse_inequality}
The proof of the inverse inequalities appearing above in \eqref{eq:discrete_inverse_inequalities} can be found simply by expanding the function $\vphi$ in any orthogonal basis $\{ \psi_k \}_{k=1}^{\infty}$ of $H^1_0(\oma)$ as $\vphi(t) = \sum_{k=1}^\infty c_k(t) \psi_k$, where the coefficient functions $c_k$ are real-valued polynomials of degree at most $q_n$, for all $k\geq 1$, and then by applying coefficient-wise known inverse inequalities for real-valued functions.
\end{remark}

Lemma~\ref{lem:X_norm_main_estimate} constitutes the first step towards the local lower bound \eqref{eq:Xnorm_local_efficiency}. In particular, we see that the test functions in \eqref{eq:X_norm_efficiency_main_estimate} are bounded in $H^1(I_n;\Hneg)$ norm. In order to exploit this property, we use a second key idea for our analysis, which is to employ the following special property of the time-discretization scheme. Together, these two ingredients allows us to obtain the lower bounds assuming only that $h^2 \lesssim \tau$, rather than the stronger requirements used in \ccite{Picasso1998,Verfurth1998}.

\begin{lemma}[Pointwise identity]\label{lem:time_dg_exact}
For each $1\leq n\leq N$ and each interior vertex $\ver\in\calVhint$, the functions $\Uth$ and $u_{\th}$ satisfy
\begin{equation}\label{eq:scheme_pointwise_identity}
\begin{aligned}
\pair{\p_t\Uth}{\psia} + (\nabla u_{\th},\nabla \psia)= (\Pia f,\psia)  & & &\text{pointwise in } I_n,
\end{aligned}
\end{equation}
where $\Pia f$ was defined in section~\ref{sec:data_approximation}.
\end{lemma}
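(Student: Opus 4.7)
The plan is to derive \eqref{eq:scheme_pointwise_identity} directly from the numerical scheme in the reformulated form \eqref{eq:num_scheme_equiv}, by choosing a suitable test function of tensor-product type and then exploiting the polynomial structure of both sides in time. Since the identity is asserted at interior vertices only, we will use crucially that for $\ver \in \calVhint$ the hat function $\psia$ vanishes on $\partial\oma$, so that $\psia \in V^n$.

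First, for an arbitrary scalar time-polynomial $\phi \in \calQ_{q_n}(I_n)$, I would plug the tensor-product test function $v_{\th}(x,t) \coloneqq \psia(x)\,\phi(t) \in \calQ_{q_n}(I_n;V^n)$ into \eqref{eq:num_scheme_equiv}, obtaining
\begin{equation*}
\int_{I_n} \phi(t)\bigl[ (\p_t \Uth,\psia)_{\oma} + (\nabla u_{\th},\nabla\psia)_{\oma} \bigr] \,\dd t = \int_{I_n} \phi(t)\,(f,\psia)_{\oma} \,\dd t.
\end{equation*}
The next step is to replace $f$ by $\Pia f$ on the right-hand side. Since $p_{\ver} \geq 2$ by \eqref{eq:patch_polynomial_degree}, the constant function $1$ lies in $\calP_{p_\ver - 1}(\Ta)$, so $q_{\th}(x,t) \coloneqq \phi(t)\cdot 1 \in \calQ_{q_n}(I_n;\calP_{p_\ver - 1}(\Ta))$ is an admissible test in the definition of the weighted projection $\Pia$, yielding $\int_{I_n}\phi(f,\psia)_{\oma}\dd t = \int_{I_n}\phi(\Pia f,\psia)_{\oma}\dd t$.

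Combining the two identities, for every $\phi \in \calQ_{q_n}(I_n)$,
\begin{equation*}
\int_{I_n}\phi(t)\,\bigl[(\p_t \Uth,\psia)_{\oma} + (\nabla u_{\th},\nabla\psia)_{\oma} - (\Pia f,\psia)_{\oma}\bigr]\,\dd t = 0.
\end{equation*}
The bracketed quantity is a polynomial in $t$ of degree at most $q_n$ on $I_n$ because $u_{\th}|_{I_n}$, $\p_t \Uth|_{I_n}$, and $\Pia f|_{I_n}$ are all elements of $\calQ_{q_n}(I_n;\cdot)$. Its $L^2(I_n)$-orthogonality against every element of $\calQ_{q_n}(I_n)$ therefore forces it to vanish identically on $I_n$, giving \eqref{eq:scheme_pointwise_identity}.

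There is no substantial obstacle in this argument; the only points that require care are (i) checking that $\psia \in V^n$, which is precisely where the assumption that $\ver$ is an \emph{interior} vertex enters (for boundary vertices, $\psia$ generally does not satisfy the homogeneous Dirichlet condition and the pointwise identity cannot be expected in this form), and (ii) verifying that constants belong to $\calP_{p_\ver - 1}(\Ta)$ so that $\phi \cdot 1$ is admissible in the defining relation for $\Pia$, which follows from~\eqref{eq:patch_polynomial_degree}.
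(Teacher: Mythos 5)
Your proposal is correct and follows essentially the same route as the paper's own proof: test \eqref{eq:num_scheme_equiv} with the tensor product $\psia\phi$ (valid since $\ver\in\calVhint$ implies $\psia\in\Vn$), replace $f$ by $\Pia f$ via the defining relation of the weighted projection, and conclude by noting that a degree-$q_n$ polynomial orthogonal to all of $\calQ_{q_n}(I_n)$ vanishes identically. Your extra remark that constants belong to $\calP_{p_\ver-1}(\Ta)$ because of \eqref{eq:patch_polynomial_degree} is a correct and slightly more explicit justification of the step the paper states without comment.
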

\begin{proof}
Since $\ver\in\calVhint$, it follows that  $\phi \psia\in \calQ_{q_n}(I_n;\Vn)$ for any polynomial $\phi$ in time of degree at most $q_n$ over $I_n$.
Therefore, the numerical scheme~\eqref{eq:num_scheme_equiv} implies that, for any real-valued polynomial $\phi$ in time of degree at most $q_n$,
\[
\int_{I_n}\phi \left[(f,\psia)-(\p_t \Uth,\psia)-(\nabla u_{\th},\nabla \psia)\right]\dd t =0.
\]
Furthermore, the definition of $\Pia$ implies that $\int_{I_n} \phi (f,\psia) \dd t = \int_{I_n} \phi (\Pia f, \psia) \dd t$ for any real-valued polynomial  $\phi$ in time of degree at most $q_n$. Since the function $t\mapsto (\p_t \Uth (t),\psia)+(\nabla u_{\th}(t),\nabla \psia)- (\Pia f(t),\psia)$ is a real-valued polynomial of degree at most $q_n$ over $I_n$, it follows that it vanishes everywhere in $I_n$. We therefore obtain \eqref{eq:scheme_pointwise_identity}.
\qquad\end{proof}

We now give the proof of the~bounds~\eqref{eq:Xnorm_local_efficiency} and \eqref{eq:Xnorm_global_efficiency} under the hypothesis stated in Theorem~\ref{thm:X_norm_guaranteed_efficiency}.

\paragraph{Proof of the bounds~\eqref{eq:Xnorm_local_efficiency} and \eqref{eq:Xnorm_global_efficiency}}
The proof consists in bounding the right-hand side of \eqref{eq:X_norm_efficiency_main_estimate} so as to show that, for each $\ver\in\calVh$, we have the bound
\begin{multline}\label{eq:X_norm_local_efficiency_2}
\int_{I_n} \norm{\stha + \psia \nabla u_{\th}}_{\oma}^2\,\dd t 
\leq C_{\gamma_{\ver},q_n}^2 \left\{ \int_{I_n} \norm{\nabla(u-\Uth)}_{\oma}^2 + \norm{\nabla(u-u_{\th})}_{\oma}^2\,\dd t \right. \\ 
\left. +  \int_{I_n}\norm{ f - \Pia f}_{H^{-1}(\oma)}^2 \,\dd t \right\},
\end{multline}
where $C_{\gamma_{\ver},q_n} \lesssim (q_n+1)^{\frac{1}{2}} + \gamma (q_n+1)^{\frac{5}{2}}$.
Then, once \eqref{eq:X_norm_local_efficiency_2} is known, it is then straightforward to show~\eqref{eq:Xnorm_local_efficiency} and~\eqref{eq:Xnorm_global_efficiency} from~\eqref{eq:X_norm_local_efficiency_1}.

To show~\eqref{eq:X_norm_local_efficiency_2}, we will treat first the more difficult case where $\ver\in \calVhint$ is an interior node.
It will be convenient to denote $\hpsia \coloneqq \psia / \norm{\psia}_{L^1(\oma)}$ the renormalized hat function associated with $\ver$. Let $\vphi\in \calQ_{q_n+2}(I_n;H^1_0(\oma))\cap H^1_0(I_n;H^1_0(\oma))$ be a fixed but arbitrary test function, such that $\int_{I_n}\norm{\p_t \vphi}_{\Hneg}^2+\norm{\nabla \vphi}_{\oma}^2\,\dd t=1$.
It follows that the zero-extension of $\vphi$ to $\Om\times (0,T)$ belongs to $\YT$, and therefore, we may use the weak formulation \eqref{eq:X_formulation} in the definition of $\Ra$ from \eqref{eq:patch_residual} to find that
\begin{equation}\label{eq:patch_residual_1}
\pair{\Ra}{\vphi} = \int_{I_n}-( u-\Uth, \p_t \vphi)_{\oma} + (\nabla(u-u_{\th}),\nabla \vphi)_{\oma} + (\Pia f - f,\vphi)_{\oma}\,\dd t.
\end{equation}
Note that, in general, $u-\Uth$ fails to belong to $\Hstar$ when $\ver\in\calVhint$ is an interior node because we can not generally guarantee that $(u-\Uth,\psia)_{\oma}=0$ a.e.~in time; thus, $\abs{(u-\Uth,\p_t \vphi)_{\oma}}\not\leq \norm{\nabla (u-\Uth)}_{\oma}\norm{\p_t \vphi}_{\Hneg}$ in general. To overcome this obstacle, we introduce the auxiliary function 
\begin{equation}\label{eq:auxiliary_error_function}
e_{\ver} \coloneqq u-\Uth - (u-\Uth,\hpsia)_{\oma},
\end{equation}
that is, we subtract the $\hpsia$-weighted average of $u-\Uth$ from $u-\Uth$. It follows from the definition that $e_{\ver}(t)\in \Hstar$ and that $\norm{\nabla e_{\ver}(t)}_{\oma} = \norm{\nabla(u-\Uth)(t)}_{\oma}$ for almost all $t\in I_n$. We now show how to reformulate the patch residual $\pair{\Ra}{\vphi}$ in terms of the auxiliary function $e_{\ver}$.
First, we may choose the test function $ \hpsia (\vphi,1)_{\oma} \in \YT$ in \eqref{eq:X_formulation}, and use Fubini's theorem and linearity of integration to find that
\begin{equation}\label{eq:mean_value_term_1}
\begin{split}
\int_{I_n} - ( (u,\hpsia)_{\oma} , \p_t \vphi)_{\oma} \dd t &= \int_{I_n} - \pair{u}{\p_t(\hpsia (\vphi,1)_{\oma})} \dd t
\\ & = \int_{I_n} (f, \hpsia)_{\oma}(\vphi,1)_{\oma}- (\nabla u,\nabla \hpsia)_{\oma}(\vphi,1)_{\oma} \dd t.
\end{split}
\end{equation}
Next, we multiply \eqref{eq:scheme_pointwise_identity} by $(\vphi,1)_{\oma}$ and integrate by parts over $I_n$ and obtain
\begin{equation}\label{eq:mean_value_term_2}
\int_{I_n} - ( (\Uth,\hpsia)_{\oma}, \p_t \vphi )_{\oma} \dd t = \int_{I_n} (\Pia f,\hpsia)_{\oma} (\vphi,1)_{\oma} - (\nabla u_{\th},\nabla\hpsia)_{\oma} (\vphi,1)_{\oma} \dd t.
\end{equation}
The combination of \eqref{eq:patch_residual_1} with \eqref{eq:mean_value_term_1} and \eqref{eq:mean_value_term_2} shows that $\pair{\Ra}{\vphi}=\sum_{i=1}^5 R_i$, where the quantities $R_i$, $1\leq i \leq 5$, are defined by
\begin{gather*}
 R_1 \coloneqq \int_{I_n} -  ( e_{\ver}  , \p_t \vphi  )_{\oma}\,\dd t ,
\\  R_2 \coloneqq \int_{I_n} (\nabla(u-u_{\th}),\nabla\vphi )_{\oma}\,\dd t,
 \qquad R_3 \coloneqq - \int_{I_n} (\nabla(u-u_{\th}),\nabla \hpsia)_{\oma}(\vphi,1)_{\oma}\,\dd t,
\\  R_4 \coloneqq \int_{I_n} (f- \Pia f,\hpsia)_{\oma}(\vphi,1)_{\oma}\,\dd t,
  \qquad R_5 \coloneqq - \int_{I_n} ( f - \Pia f,\vphi)_{\oma}\,\dd t .
\end{gather*}
Using the fact that $\int_{I_n} \norm{\p_t \vphi}^2_{\Hneg}\,\dd t\leq 1$, where we recall that $\Hstar$ is defined in~\eqref{eq:Hstar}, and that $\norm{\nabla e_{\ver}}_{\oma}=\norm{\nabla(u-\Uth)}_{\oma}$, we find that $\abs{R_1}^2\leq  \int_{I_n}\norm{\nabla(u-\Uth)}_{\oma}^2\,\dd t$.
Next, we find that $\abs{R_2}^2 \leq \int_{I_n}\norm{\nabla(u-u_{\th})}_{\oma}^2\,\dd t$. To bound $R_3$ and $R_4$, we apply the Cauchy--Schwarz inequality and use the Poincar\'e inequality on $H^1_0(\oma)$ to obtain
\[
\abs{R_3}^2 + \abs{R_4}^2 \lesssim \int_{I_n} \frac{h_{\oma}^2 \abs{\oma} \norm{\nabla \psia}_{\oma}^2}{\norm{\psia}_{L^1(\oma)}^2} \left[ \norm{\nabla(u-u_{\th})}_{\oma}^2+\norm{f-\Pia f}_{H^{-1}(\oma)}^2\right] \,\dd t,
\] 
where $\abs{\oma}$ denotes the measure of $\oma$.
Since there is a constant depending only on the shape-regularity of the elements of the patch $\oma$ such that $h_{\oma} \abs{\oma}^{1/2}\norm{\nabla \psia}_{\oma}\lesssim \norm{\psia}_{L^1(\oma)}$, we find that $\abs{R_3}^2 +\abs{R_4}^2 \lesssim \int_{I_n} \norm{\nabla(u-u_{\th})}_{\oma}^2+\norm{f-\Pia f}_{H^{-1}(\oma)}^2\dd t$.
Finally, it is straightforward to show that  $\abs{R_5}^2 \leq \int_{I_n} \norm{ f -\Pia f}_{H^{-1}(\oma)}^2\,\dd t $. Therefore, the above bounds on the quantities $R_i$ imply \eqref{eq:X_norm_local_efficiency_2} for the case where $\ver\in\calVhint$ is an interior vertex.

The analogous result for the case where $\ver\in\calVhext$ is a boundary vertex poses fewer difficulties than the case of interior vertices, owing to the fact that $u-\Uth \in \Hstar$ for a.e. $t\in I_n$, since $u$ and $\Uth$ are both in $X$ and therefore have vanishing trace on $\p\oma\cap \DO$.

Using the triangle inequality $\norm{\nabla(u-\Uth)}_{\oma} \leq \norm{\nabla(u-u_{\th})}_{\oma} + \norm{\nabla(u_{\th}-\Uth)}_{\oma}$, it is then straightforward to obtain \eqref{eq:Xnorm_local_efficiency} and \eqref{eq:Xnorm_global_efficiency} from \eqref{eq:X_norm_local_efficiency_1} and \eqref{eq:X_norm_local_efficiency_2}.\qed

\ifIMA
\section*{Funding}
This project has received funding from the European Research Council (ERC) under the European Union's Horizon 2020 research and innovation program (grant agreement No 647134 GATIPOR).
\fi

\ifIMA
\input{heat_IMA_biblio}
\else

\fi


\begin{thebibliography}{10}

\bibitem{AkrivisMakridakisNochetto2009}
{\sc G.~Akrivis, C.~Makridakis, and R.~H. Nochetto}, {\em Optimal order a
  posteriori error estimates for a class of {R}unge-{K}utta and {G}alerkin
  methods}, Numer. Math., 114 (2009), pp.~133--160.

\bibitem{AmreinWihler2016}
{\sc M.~Amrein and T.~P. Wihler}, {\em An adaptive space-time
  {N}ewton--{€"G}alerkin approach for semilinear singularly perturbed parabolic
  evolution equations}, IMA J. Numer. Anal. (2016). \href{https://doi.org/10.1093/imanum/drw049}{doi:10.1093/imanum/drw049}.

\bibitem{BergamBernardiMghazli2005}
{\sc A.~Bergam, C.~Bernardi, and Z.~Mghazli}, {\em A posteriori analysis of the finite element discretization of some parabolic equations}, Math. Comp., 74 (2005), pp.~1117--1138.

\bibitem{BraessPillweinSchoberl2009}
{\sc D.~Braess, V.~Pillwein, and J.~Sch\"oberl}, {\em Equilibrated residual error estimates are {$p$}-robust}, Comput. Methods Appl. Mech. Engrg., 198 (2009), pp.~1189--1197.

\bibitem{ChenFeng2004}
{\sc Z.~Chen and J.~Feng}, {\em An adaptive finite element algorithm with  reliable and efficient error control for linear parabolic problems}, Math.  Comp., 73 (2004), pp.~1167--1193.

\bibitem{CostabelMcintosh2010}
{\sc M.~Costabel and A.~McIntosh}, {\em On {B}ogovski\u\i \ and regularized
  {P}oincar\'e integral operators for de {R}ham complexes on {L}ipschitz
  domains}, Math. Z., 265 (2010), pp.~297--320.

\bibitem{DiPietroVohralikSoleiman2015}
{\sc D.~A. Di~Pietro, M.~Vohral{\'{\i}}k, and S.~Yousef}, {\em Adaptive regularization, linearization, and discretization and a posteriori error control for the two-phase {S}tefan problem}, Math. Comp., 84 (2015), pp.~153--186.

\bibitem{DolejsiErnVohralik2013}
{\sc V.~Dolej{\v{s}}{\'{\i}}, A.~Ern, and M.~Vohral{\'{\i}}k}, {\em A framework for robust a posteriori error control in unsteady nonlinear advection-diffusion problems}, SIAM J. Numer. Anal., 51 (2013), pp.~773--793.

\bibitem{DolejsiErnVohralik2016}
\leavevmode\vrule height 2pt depth -1.6pt width 23pt, {\em {$hp$}-adaptation driven by polynomial-degree-robust a posteriori error estimates for elliptic problems}, SIAM J. Sci. Comput., 38 (2016), pp.~A3220--A3246.

\bibitem{ErikssonJohnson1995}
{\sc K.~Eriksson and C.~Johnson}, {\em Adaptive finite element methods for parabolic problems. {II}. {O}ptimal error estimates in {$L_\infty L_2$} and {$L_\infty L_\infty$}}, SIAM J. Numer. Anal., 32 (1995), pp.~706--740.

\bibitem{ErnSchieweck2016}
{\sc A.~Ern and F.~Schieweck}, {\em Discontinuous {G}alerkin method in time combined with a stabilized finite element method in space for linear first-order {PDE}s}, Math. Comp., 85 (2016), pp.~2099--2129.

\bibitem{ErnSmearsVohralik2016}
{\sc A.~Ern, I.~Smears, and M.~Vohral{\'i}k}, {\em Guaranteed, locally space-time efficient, and polynomial-degree robust a posteriori error   estimates for high-order discretizations of parabolic problems}, submitted for publication  (2016). Preprint available at \curl{https://hal.archives-ouvertes.fr/hal-01377086}.

\bibitem{ErnSmearsVohralik2016c}
\leavevmode\vrule height 2pt depth -1.6pt width 23pt, {\em Discrete p-robust discrete {$p$}-robust {$\bm{H}(\Div)$}-liftings and a posteriori error analysis of elliptic problems with {$H^{-1}$} source terms}, Calcolo (2017). \href{https://dx.doi.org/10.1007/s10092-017-0217-4}{doi:10.1007/s10092-017-0217-4}.

\bibitem{ErnVohralik2010}
{\sc A.~Ern and M.~Vohral{\'{\i}}k}, {\em A posteriori error estimation based
  on potential and flux reconstruction for the heat equation}, SIAM J. Numer.
  Anal., 48 (2010), pp.~198--223.

\bibitem{ErnVohralik2015}
\leavevmode\vrule height 2pt depth -1.6pt width 23pt, {\em
  Polynomial-degree-robust a posteriori estimates in a unified setting for
  conforming, nonconforming, discontinuous {G}alerkin, and mixed
  discretizations}, SIAM J. Numer. Anal., 53 (2015), pp.~1058--1081.

\bibitem{ErnVohralik2016a}
\leavevmode\vrule height 2pt depth -1.6pt width 23pt, {\em Stable broken
  {$H^1$} and {$\bm{H}(\Div)$} polynomial extensions for
  polynomial-degree-robust potential and flux reconstruction in three space
  dimensions}.
\newblock submitted for publication (2016). Preprint available at
  {\curl{https://hal.archives-ouvertes.fr/hal-01422204}}.

\bibitem{GaspozKreuzerSiebertZiegler2016}
{\sc F.~{Gaspoz}, C.~{Kreuzer}, K.~{Siebert}, and D.~{Ziegler}}, {\em A
  convergent time-space adaptive {dG}{$(s)$} finite element method for
  parabolic problems motivated by equal error distribution}, submitted for publication (2016). Preprint available at \url{https://arxiv.org/abs/1610.06814}.

\bibitem{GeorgoulisLakkisVirtanen2011}
{\sc E.~H. Georgoulis, O.~Lakkis, and J.~M. Virtanen}, {\em A posteriori error
  control for discontinuous {G}alerkin methods for parabolic problems}, SIAM J.
  Numer. Anal., 49 (2011), pp.~427--458.

\bibitem{Kreuzer2013}
{\sc C.~Kreuzer}, {\em Reliable and efficient a posteriori error estimates for
  finite element approximations of the parabolic {$p$}-{L}aplacian}, Calcolo,
  50 (2013), pp.~79--110.

\bibitem{KreuzerMollerSchmidtSiebert2012}
{\sc C.~Kreuzer, C.~A. M{\"{o}}ller, A.~Schmidt, and K.~G. Siebert}, {\em
  Design and convergence analysis for an adaptive discretization of the heat
  equation}, IMA J. Numer. Anal., 32 (2012), pp.~1375--1403.

\bibitem{LakkisMakridakis2006}
{\sc O.~Lakkis and C.~Makridakis}, {\em Elliptic reconstruction and a
  posteriori error estimates for fully discrete linear parabolic problems},
  Math. Comp., 75 (2006), pp.~1627--1658.

\bibitem{MakridakisNochetto2003}
{\sc C.~Makridakis and R.~H. Nochetto}, {\em Elliptic reconstruction and a
  posteriori error estimates for parabolic problems}, SIAM J. Numer. Anal., 41
  (2003), pp.~1585--1594.

\bibitem{MakridakisNochetto2006}
\leavevmode\vrule height 2pt depth -1.6pt width 23pt, {\em A posteriori error
  analysis for higher order dissipative methods for evolution problems}, Numer.
  Math., 104 (2006), pp.~489--514.

\bibitem{NicaiseSoualem2005}
{\sc S.~Nicaise and N.~Soualem}, {\em A posteriori error estimates for a
  nonconforming finite element discretization of the heat equation}, M2AN Math.
  Model. Numer. Anal., 39 (2005), pp.~319--348.

\bibitem{Picasso1998}
{\sc M.~Picasso}, {\em Adaptive finite elements for a linear parabolic
  problem}, Comput. Methods Appl. Mech. Engrg., 167 (1998), pp.~223--237.

\bibitem{Repin2002}
{\sc S.~Repin}, {\em Estimates of deviations from exact solutions of
  initial-boundary value problem for the heat equation}, Atti Accad. Naz.
  Lincei Cl. Sci. Fis. Mat. Natur. Rend. Lincei (9) Mat. Appl., 13 (2002),
  pp.~121--133.
  
\bibitem{SchotzauWihler2010}
{\sc D.~Sch{\"o}tzau and T.~P. Wihler}, {\em A posteriori error estimation for
  {$hp$}-version time-stepping methods for parabolic partial differential
  equations}, Numer. Math., 115 (2010), pp.~475--509.

\bibitem{Smears2016}
{\sc I.~Smears}, {\em Robust and efficient preconditioners for the %
  discontinuous {G}alerkin time-stepping method}, IMA J. Numer. Anal. (2016). \href{https://dx.doi.org/10.1093/imanum/drw050}{doi:10.1093/imanum/drw050}.

\bibitem{Verfurth1998a}
{\sc R.~Verf\"urth}, {\em A posteriori error estimates for nonlinear problems.
  {$L^r(0,T;L^\rho(\Omega))$}-error estimates for finite element
  discretizations of parabolic equations}, Math. Comp., 67 (1998),
  pp.~1335--1360.

\bibitem{Verfurth1998}
{\sc R.~Verf{\"u}rth}, {\em A posteriori error estimates for nonlinear
  problems: {$L^r(0,T;W^{1,\rho}(\Omega))$}-error estimates for finite element
  discretizations of parabolic equations}, Numer. Methods Partial Differential
  Equations, 14 (1998), pp.~487--518.

\bibitem{Verfurth2003}
\leavevmode\vrule height 2pt depth -1.6pt width 23pt, {\em A posteriori error
  estimates for finite element discretizations of the heat equation}, Calcolo,
  40 (2003), pp.~195--212.

\bibitem{Verfurth2013}
{\sc R.~Verf{\"u}rth}, {\em A posteriori error estimation techniques for finite
  element methods}, Numerical Mathematics and Scientific Computation, Oxford
  University Press, Oxford, 2013.

\bibitem{Wloka1987}
{\sc J.~Wloka}, {\em Partial differential equations}, Cambridge University
  Press, Cambridge, 1987.

\end{thebibliography}
\end{document}